\newtheorem{theorem}{Theorem}[section]
\newtheorem{corollary}[theorem]{Corollary}
\newtheorem{lemma}[theorem]{Lemma}
\theoremstyle{definition}
\newtheorem{definition}[theorem]{Definition}
\newtheorem{remark}[theorem]{Remark}
\numberwithin{equation}{section}
\begin{document}
%\title[A convex characterization of Q-P-D functions]{A convex characterization of quaternionic positive definite functions on abelian groups}

\title[Convex characteristics of quaternionic positive definite functions]{Convex characteristics of quaternionic positive definite functions on abelian groups}

\thanks{}
\author[Z. P. Zhu]{Zeping Zhu}
\email{zzp$\symbol{64}$mail.ustc.edu.cn}
%\address{Zeping zhu, Department of Mathematics, University of Science and Technology of China, Hefei 230026, China}

%\thanks
%{Research (partially) supported by {\it Unidade de Investiga\c
%c\~ao Matem\'atica e Aplica\c c\~oes} of the University of
%Aveiro.}
%\thanks {Partially supported by the NNSF  of China (10471134).}
%\keywords{}
%\subjclass[]{}

\begin{abstract}
This paper is concerned with the topological space of normalized quaternion-valued positive definite functions on an arbitrary abelian group $G$, especially its convex characteristics. There are two main results. Firstly, we prove that the extreme elements in the family of such functions are exactly the homomorphisms from $G$ to the sphere group $\mathbb{S}$, i.e., the unit $3$-sphere in the quaternion algebra. Secondly, we reveal a phenomenon which does not exist in the complex setting: The compact convex set of such functions is not a Bauer simplex except when $G$ is of exponent $\leq 2$. In contrast, its complex counterpart is always a Bauer simplex, as is well known. We also present an integral representation for such functions as an application and some other minor interesting results.
\end{abstract}
\maketitle

\section{Introduction}
\subsection{background}

The theory of positive definite and related functions is an important ingredient in harmonic analysis with strong connections to moment problems, unitary representations, reproducing kernel Hilbert spaces, stationary stochastic processes, Hoeffding-type inequalities and many other fields. Our interest lies on its further development in the noncommutative setting, especially the quaternionic case.

It is well known that under certain regularity conditions every complex-valued positive definite function is the Fourier-type transform of a positive measure. This is formulated in exact terms in the famous theorems of Herglotz, Bochner, Hamburger and Bernstein-Widder. All these theorems are special cases of a theorem in the general setting of abelian semigroups (one may refer to, e.g., \cite{Berg-1976,Berg-1984,Lindahl-1971} for details). The four results mentioned above correspond to the following semigroups with involution respectively: $$(\mathbb{Z},+,x^*=-x),\ (\mathbb{R},+,x^*=-x),\  (\mathbb{N},+,x^*=x),\ ([0.+\infty),+,x^*=x).$$ These contributions compose a rather satisfactory theory of the characterization of complex-valued positive definite functions. Some earlier contributions on  this topic or related topics in the quaternionic setting have been made by Alpay along with his coauthors (see, e.g., \cite{Alpay-2016,Alpay-2017}) and some other academics. But there are still some fundamental problems that remain unsolved.

\subsection{study subject}
In the complex setting, for a normalized positive-definite function $\phi$ on an abelian group $G$, there exists a unique regular Borel probability measure $\mu$  on the dual group $\widehat{G}$ such that
$$\phi(x)=\int_{\widehat{G}}\gamma(x)d\mu(\gamma), $$
and vice verse.
Here the dual group $\widehat{G}$ consists of complex characters on $G$. From the view-point of convexity, this correspondence  indicates that the extreme elements in the convex set $\mathcal{P}_*$ of normalized positive-definite functions are precisely the complex characters. Moreover, based on Choquet's theory, the uniqueness of $\mu$ implies that $\mathcal{P}_*$ is a simplex, thereby a Bauer simplex since its extreme boundary is closed (one may refer to Theorems 3.6 and 4.1 in Chapter II of \cite{Alfsen-1971} for more details). In brief, $\mathcal{P}_*$ shares almost the same convex characteristics with the finite dimensional simplex. These results are also valid for the exponentially bounded positive definite functions on abelian semigroups (see, e.g., \cite{Berg-1976,Berg-1984}).

Although, there are several generalizations of Bochner's theorem that characterize the quaternionic positive definiteness, the convex structure of
normalized quaternion-valued positive definite functions is still unrevealed, since the integral characterizations are given by the Fourier-type transforms of certain kinds of quaternion-valued measures, not necessarily positive ones, on the dual group (see, e.g., Theorem 4.5 in \cite{Alpay-2016}). In this paper, we intend to solve two questions on this topic:
\begin{itemize}
  \item[1)]How to characterize the extreme boundary of the convex set of normalized quaternion-valued positive definite functions on an abelian group.

  \item[2)]Whether this convex set is a Bauer simplex in the pointwise convergence topology.
\end{itemize}

\subsection{main results}
The answers to the main questions mentioned above are as follows:
\begin{itemize}
  \item[1)]The extreme boundary of the convex set of normalized quaternion-valued positive definite functions consists of all the quaternion-valued characters.

  \item[2)]This set is a Bauer simplex if and only if the given group is of exponent $\leq 2$, or equivalently it is a $\mathbb{Z}_2$-vector space.
\end{itemize}
A global characterization of quaternion-valued positive definite functions follows naturally:
\begin{itemize}
  \item[3)]for a normalized positive-definite function $\phi$ on an abelian group $G$, there exists a (not necessarily unique)  regular Borel probability measure $\mu$  on its quaternionic dual $G^\delta$ such that
$$\phi(x)=\int_{G^\delta}\Gamma(x)d\mu(\Gamma). $$
Conversely, the quaternionic positive definiteness is characteristic for such expressions.
\end{itemize}
Here $G^\delta$ consists of all the quaternion-valued characters, i.e., homomorphisms from $G$ to the group of unit quaternions with the composition being the multiplication inherited from the quaternion algebra.
%More precisely, we have
%\begin{itemize}
 % \item[1)]The corresponding measure $\mu$ is unique, only if $\phi$ is the multiplication of a nonnegative real and a quaternionic character.
%  \item[2)]Otherwise, there are infinitely many corresponding measures for $\phi$.
%\end{itemize}
Intuitively speaking, in most cases (except when $G$ is of exponent $\leq 2$) the convex set of normalized quaternion-valued positive definite functions has a more evenly distributed extreme boundary in contrast to its complex counterpart, the extreme boundary of which is less evenly distributed as in the primitive case of a $n$-simplex.

We can say that our work reveals a phenomenon different from the classical case, and also fills a research gap in the quaternionic setting.

\subsection{arrangement of sections}
This paper is organized as follows. Section 2 contains basic notions and properties of quaternion-valued positive definite functions. Section 3 is split into three major parts. In the first subsection, the main questions are discussed in the special case of $G=\mathbb{Z}$. In the second subsection, the first one of the main results above is established in the general case when $G$ is an arbitrary abelian group, and an integral representation for quaternion-valued positive definite functions, i.e., the third main result, is given as an application. The final subsection is devoted to the second one of the main results above.

%Note that unlike its counterpart in the complex setting, $G^*$ does not possess a natural group structure due to the noncommutativity of quaternions.
%, i.e., homomorphisms of $(S,+,*)$ into the semigroup $(\mathbb{C}, \cdot, -)$ with the composition and involution are multiplication and conjugation.

\section{definitions and basic properties}
We work mainly on the quaternion-valued positive definite functions on abelian groups. Nevertheless, the relevant concepts shall be introduced in the more general setting of semigroups with involution.

\subsection{quaternionic positive definite functions}\label{subsec-pdf}

\begin{definition}\cite{Drazin-1978}
  Let $S$ be a semigroup, i.e., a nonempty set furnished with an associative binary operation $\circ$ and a neutral element $e$. An involution on $S$ is a bijection $s\mapsto s^*$ of $S$ onto itself, satisfying
  $(\alpha^*)^*=\alpha,\ (\alpha\beta)^*=\beta^*\alpha^*$.
\end{definition}
%We write such a semigroup with involution as $(S,\circ,*)$, or $(S,*)$ for short.

For an abelian group, the composition and the neutral element are conventionally denoted by $+$ and $0$; and there are two natural involutions defined as $s^*=-s$ and $s^*=s$, respectively.

Let $\mathbb{H}$ denote the real quaternion algebra
$$\{q=a_0i_0+a_1i_1+a_2i_2+a_3i_3:\ a_i\in\mathbb{R}\ (i=0,1,2,3)\},
$$
where $i_0=1$, $i_3=i_1i_2$, and $i_1,i_2$ are the generators of  $\mathbb H$, subject to the following identities:
 $$i_1^2=i_2^2=-1, \qquad i_1i_2=-i_2i_1. $$
For any $q\in \mathbb{H}$,
its conjugate is defined as $\overline{q}:=a_0i_0-a_1i_1-a_2i_2-a_3i_3$, and its norm given by $\lvert q\rvert:=\sqrt{a_0^2+a_1^2+a_2^2+a_3^2}$. One may refer to \cite{Gurlebeck-2008} for more details about the real quaternion algebra.

Hereinafter we always assume that $S$ is a semigroup equipped with some involution.
\begin{definition}
  A quaternion-valued function $\phi$ on $S$ is said to be  positive definite if for any $s_1,s_2,\cdots,s_k\in S$  and any $q_1,q_2,\cdots,q_k\in\mathbb{H}$, the following inequality \begin{equation}\label{Eq-Def-PDF}
    \sum_{1\leq i,j\leq k} \overline{q_i}\phi(s_i^*\circ s_j)q_j\geq 0
  \end{equation}
  is satisfied.
\end{definition}

We denote the set of quaternion-valued  positive definite functions on $S$ by $\mathcal{P}^\mathbb{H}(S)$. Additionally, a function $\phi$ on $S$ is called normalized if $\phi(e)=1$. Then we define $$\mathcal{P}_*^\mathbb{H}(S):=\{\phi\in\mathcal{P}^\mathbb{H}(S): \phi \text{ is normalized}\}. $$
One may refer to \cite{Alpay-2016,Alpay-2017}, etc for analogous definitions of quaternionic positive definiteness.

\begin{theorem}\label{thm-basic-properties}
  Any quaternion-valued  positive definite function $\phi$ satisfies the following properties:
  \begin{itemize}
  \item[i)]$\phi$ is hermitian, i.e., $\phi(s^*)=\overline{\phi(s)}$ for $s\in S$;
  \item[ii)]$\phi(\alpha^*\circ \alpha)\geq0$ and $\lvert\phi(\alpha^*\circ \beta)\rvert^2\leq\phi(\alpha^*\circ \alpha) \phi(\beta^*\circ \beta)$.
  \end{itemize}
\end{theorem}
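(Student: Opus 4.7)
Both statements are obtained by specializing the defining inequality \eqref{Eq-Def-PDF} to small $k$ and particular choices of $s_i$ and $q_i$. I first record the elementary fact that $e^*=e$ in any involutive semigroup (immediate from $(e\circ e)^*=e^*\circ e^*$ and uniqueness of the identity), so that $e^*\circ e = e$. The first half of (ii) together with the reality of $\phi(\alpha^*\circ\alpha)$ follows instantly from $k=1$, $s_1=\alpha$, $q_1=1$: the sum in \eqref{Eq-Def-PDF} is required to be a nonnegative real number, so $\phi(\alpha^*\circ\alpha)\in\mathbb{R}_{\geq 0}$.

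For (i), the plan is to take $k=2$, $s_1=e$, $s_2=s$, with $q_1=1$ and arbitrary $q_2=q\in\mathbb{H}$. Writing $p:=\phi(s)$ and $r:=\phi(s^*)$, the inequality \eqref{Eq-Def-PDF} becomes
\[ \phi(e) + pq + \overline{q}\,r + \phi(s^*\circ s)|q|^2 \;\geq\; 0. \]
Two of the four terms are already real, so $pq+\overline{q}\,r$ must itself be real for every $q$. Setting $a:=p-\overline{r}$ (so $\overline{a}=\overline{p}-r$), self-conjugacy of $pq+\overline{q}\,r$ rearranges to
\[ aq \;=\; \overline{q}\,\overline{a}\qquad\text{for all }q\in\mathbb{H}. \]
Taking $q=1$ forces $a\in\mathbb{R}$, and then $q=i_1$ gives $ai_1=-i_1 a=-ai_1$, so $a=0$. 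This is precisely $\phi(s^*)=\overline{\phi(s)}$.

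The Cauchy–Schwarz estimate in (ii) follows from $k=2$, $s_1=\alpha$, $s_2=\beta$. Set $a=\phi(\alpha^*\circ\alpha)$, $b=\phi(\beta^*\circ\beta)$, $c=\phi(\alpha^*\circ\beta)$; by (i) already proved, $\phi(\beta^*\circ\alpha)=\overline{c}$. The inequality \eqref{Eq-Def-PDF} then reads
\[ a|q_1|^2 + \overline{q_1}\,c\,q_2 + \overline{q_2}\,\overline{c}\,q_1 + b|q_2|^2 \;\geq\; 0 \]
for all $q_1,q_2\in\mathbb{H}$. Substituting $q_1=tc$ (with $t\in\mathbb{R}$) and $q_2=-1$, the two cross terms each collapse to $-t|c|^2$ using $\overline{c}c=|c|^2\in\mathbb{R}$, leaving the real quadratic inequality $a|c|^2 t^2-2|c|^2 t + b\geq 0$. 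When $c\neq 0$ and $a>0$, non-positivity of the discriminant yields $|c|^2\leq ab$; the edge cases $c=0$ and $a=0$ (which, inspecting the same inequality at large $t$, forces $c=0$) are verified directly.

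The one place where the quaternionic setting genuinely differs from the complex one is in the proof of (i): the identity $aq=\overline{q}\,\overline{a}$ has only the solution $a=0$ precisely because a nonzero real cannot anticommute with $i_1$. Hence the non-commutativity of $\mathbb{H}$, rather than being an obstacle, is exactly what closes the argument; everything else proceeds in close parallel with the classical scalar-valued case.
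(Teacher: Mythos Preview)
Your proof is correct and is precisely the kind of argument the paper has in mind: the paper omits the proof entirely, stating only that ``it follows the same procedure as in the classical case,'' and your write-up is a faithful execution of that classical procedure with the minor adaptations needed for $\mathbb{H}$.

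One small editorial comment: your closing remark that ``the non-commutativity of $\mathbb{H}$\ldots is exactly what closes the argument'' in part (i) is overstated. The identity $aq=\overline{q}\,\overline{a}$ for all $q$ already forces $a=0$ in $\mathbb{C}$ by the identical reasoning (take $q=1$ to get $a\in\mathbb{R}$, then $q=i$ to get $ai=-ia$); non-commutativity plays no special role here. This does not affect the validity of the proof itself, only the commentary.
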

The proof of this theorem is omitted since it follows the same procedure as in the classical case.

\begin{theorem}\label{thm-positive-definiteness}
Let $\phi$ be a quaternion-valued  positive definite function on $S$. Then for any $p_i\in\mathbb{H}$ and $t_i\in S$ $(i=1,2,\cdots,n)$, the function $\varphi$ defined as
$$\varphi(s):= \sum_{1\leq j,k\leq n} \overline{p_j}\phi(t_j^*\circ s \circ t_k)p_k \quad\text{for}\quad s\in S,$$
is also positive definite on $S$.
\end{theorem}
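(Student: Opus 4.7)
The plan is to verify the defining inequality \eqref{Eq-Def-PDF} for $\varphi$ directly, by expanding it into a fourfold sum and then recognizing that sum as an instance of the defining inequality for $\phi$ on a bigger index set. This mirrors the classical argument in the complex setting; the only point that deserves care is that the quaternion algebra is noncommutative, so I have to keep track of the order of multiplications when absorbing scalars into conjugates.

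Concretely, I fix arbitrary $s_1,\ldots,s_m\in S$ and $q_1,\ldots,q_m\in\mathbb{H}$ and compute
\begin{equation*}
\sum_{1\le i,l\le m}\overline{q_i}\,\varphi(s_i^*\circ s_l)\,q_l
=\sum_{\substack{1\le i,l\le m\\ 1\le j,k\le n}}\overline{q_i}\,\overline{p_j}\,\phi\bigl(t_j^*\circ s_i^*\circ s_l\circ t_k\bigr)\,p_k\,q_l .
\end{equation*}
Then I use the identities $\overline{q_i}\,\overline{p_j}=\overline{p_j q_i}$ and $t_j^*\circ s_i^*=(s_i\circ t_j)^*$, which hold in $\mathbb H$ and in $(S,\circ,{}^*)$ respectively, to rewrite each term in the ``$\overline{r}\,\phi(u^*\circ v)\,r'$'' form. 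Introducing the reindexing $u_{(i,j)}:=s_i\circ t_j$ and $r_{(i,j)}:=p_j q_i$ over the $nm$ pairs $(i,j)$, the sum becomes
\begin{equation*}
\sum_{(i,j),(l,k)}\overline{r_{(i,j)}}\,\phi\bigl(u_{(i,j)}^*\circ u_{(l,k)}\bigr)\,r_{(l,k)} ,
\end{equation*}
which is nonnegative by the positive definiteness of $\phi$ applied to these $nm$ elements of $S$ and $nm$ quaternions.

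The only potential obstacle is the noncommutativity: one might worry that $\overline{q_i}\,\overline{p_j}$ cannot be rewritten as the conjugate of a single quaternion paired cleanly with $p_k q_l$ on the right. The observation that $\overline{ab}=\bar b\bar a$ in $\mathbb H$ is exactly what makes the index-collapsing work, so the step is legitimate; once this is noted the rest is bookkeeping. I therefore expect the proof to be short and essentially combinatorial, with the noncommutative conjugation identity as its one nontrivial ingredient.
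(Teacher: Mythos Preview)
Your proposal is correct and follows essentially the same approach as the paper: both expand the sum $\sum \overline{q_i}\varphi(s_i^*\circ s_l)q_l$ into a fourfold sum, use $\overline{q_i}\,\overline{p_j}=\overline{p_jq_i}$ and $t_j^*\circ s_i^*=(s_i\circ t_j)^*$, and then recognize the result as an instance of the defining inequality for $\phi$ over the $nm$ pairs $(s_i\circ t_j,\,p_jq_i)$. Your version is slightly more explicit about the reindexing and the role of noncommutativity, but there is no substantive difference.
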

\begin{proof}
For any $s_1,s_2,\cdots,s_m\in S$ and any $q_1,q_2,\cdots,q_m\in\mathbb{H}$, a direct calculation yields
$$\sum_{1\leq j',k'\leq m} \overline{q_{j'}}\varphi(s_{j'}^*\circ s_{k'})q_{k'}=\sum_{1\leq j',k'\leq m}\sum_{1\leq j,k\leq n}\overline{p_jq_{j'}}\phi\left((s_{j'}\circ t_j)^* \circ (s_{k'}\circ t_k)\right)p_kq_{k'}; $$
thus we derive by the quaternionic positive definiteness of $\phi$ that
$$\sum_{1\leq j',k'\leq m} \overline{q_{j'}}\varphi(s_{j'}^*\circ s_{k'})q_{k'} \geq 0. $$
It completes the proof.
\end{proof}

\subsection{exponential boundedness}

\begin{definition}\label{def-absolute-value}\cite{Berg-1984}
  A nonnegative real valued function $\alpha$ on $S$ is called an absolute value if it satisfies the following conditions:
  \begin{itemize}
  \item[i)]$\alpha(e)=1$;
  \item[ii)]$\alpha(s\circ t)\leq \alpha(s)\alpha(t)$;
  \item[iii)]$\alpha(s^*)=\alpha(s)$.
  \end{itemize}
\end{definition}
As shown in the complex case, this notion provides a certain type of boundedness condition that plays a vital role in the spectral characterization of positive definite functions.

\begin{definition}
A quaternion-valued function $\phi$ on $S$ is called bounded w.r.t. an absolute value $\alpha$ (or $\alpha$-bounded for short) if $\phi$ is dominated by $\alpha$, i.e., there exists a positive real constant $C$ such that
$$\lvert\phi(s)\rvert\leq C\alpha(s) \quad\text{for}\quad s\in S. $$
\end{definition}
For a fixed absolute value $\alpha$, we write the set of $\alpha$-bounded quaternioin-valued positive definite functions on $S$ as $\mathcal{P}_\alpha^\mathbb{H}(S)$, and the set of normalized elements in $\mathcal{P}_\alpha^\mathbb{H}(S)$ as $\mathcal{P}_{\alpha,*}^\mathbb{H}(S)$.

We say a function $\phi$ is exponentially bounded if it is dominated by some absolute value. The set of exponentially bounded quaternioin-valued positive definite functions on $S$ is denoted by $\mathcal{P}_e^\mathbb{H}(S)$, and
the set of normalized elements in $\mathcal{P}_e^\mathbb{H}(S)$ by $\mathcal{P}_{e,*}^\mathbb{H}(S)$.

\begin{theorem}\label{thm-inequality}
  Let $\phi$ be a quaternioin-valued positive definite function on $S$ and bounded w.r.t. an absolute value $\alpha$. Then
  $$|\phi(s)|\leq\phi(e)\alpha(s)\quad \text{for} \quad s\in S. $$
\end{theorem}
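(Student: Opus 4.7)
The plan is to iterate the Cauchy--Schwarz estimate from Theorem~\ref{thm-basic-properties}(ii) so that the constant $C$ hidden in the $\alpha$-boundedness of $\phi$ is pushed to a vanishing exponent, while the surviving factors coalesce into $\phi(e)\alpha(s)$.

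Concretely, I would set $v_0:=s$ and $v_{n+1}:=v_n^*\circ v_n$ for $n\geq 0$. A short induction using the involution axiom $(a\circ b)^*=b^*\circ a^*$ shows that $v_n^*=v_n$ for every $n\geq 1$, so $v_{n+1}=v_n\circ v_n$ once $n\geq 1$. Submultiplicativity and involution-invariance of $\alpha$ then give $\alpha(v_n)\leq\alpha(s)^{2^n}$ by induction, while Theorem~\ref{thm-basic-properties}(ii) yields $\phi(v_n)\geq 0$ for $n\geq 1$ together with
\[
|\phi(v_n)|^2 \;\leq\; \phi(e)\,\phi(v_n^*\circ v_n) \;=\; \phi(e)\,\phi(v_{n+1}) \qquad (n\geq 0),
\]
upon applying the Cauchy--Schwarz inequality to the pair $(e,v_n)$.

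Chaining these bounds produces, by induction on $n$, the estimate
\[
|\phi(s)|^{2^n}\;\leq\;\phi(e)^{2^n-1}\,\phi(v_n)\qquad(n\geq 1),
\]
and only at this final stage would I invoke $\alpha$-boundedness to write $\phi(v_n)\leq C\alpha(v_n)\leq C\alpha(s)^{2^n}$. Taking $2^n$-th roots yields $|\phi(s)|\leq\phi(e)^{1-1/2^n}\,C^{1/2^n}\,\alpha(s)$, and letting $n\to\infty$ kills the factor $C^{1/2^n}\to 1$ to deliver $|\phi(s)|\leq\phi(e)\alpha(s)$.

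I expect no serious obstacle. The one design choice worth flagging is the shape of the recursion $(v_n)$: it must become hermitian after the first step so that $v_{n+1}=v_n^*\circ v_n$ continues to hold, which is precisely what permits reapplying the Cauchy--Schwarz bound indefinitely. The degenerate cases $\phi(e)=0$ or $\alpha(s)=0$ each force $\phi(s)=0$ directly from Theorem~\ref{thm-basic-properties}(ii), so they need no separate treatment.
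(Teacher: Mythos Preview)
Your argument is correct and is precisely the standard iteration-of-Cauchy--Schwarz proof from \cite{Berg-1984}; the paper omits the proof with the remark that it carries over verbatim from the complex case, so your approach coincides with the intended one.
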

The proof of this result is quite similar to that given in the complex case \cite{Berg-1984} and so is omitted.

A simple corollary follows immediately by applying Tychonoff's theorem.
\begin{corollary}\label{cor-compactness}
For any absolute value $\alpha$ on $S$, the (possibly empty) convex set $\mathcal{P}_{\alpha,*}^\mathbb{H}(S)$ is  compact in the pointwise convergence topology.
\end{corollary}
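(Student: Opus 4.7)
The plan is to realize $\mathcal{P}_{\alpha,*}^\mathbb{H}(S)$ as a closed subset of a compact product space and then invoke Tychonoff's theorem, exactly as the remark preceding the corollary suggests. First, by Theorem \ref{thm-inequality} together with the normalization condition $\phi(e) = 1$, every $\phi \in \mathcal{P}_{\alpha,*}^\mathbb{H}(S)$ satisfies the sharp pointwise bound $|\phi(s)| \leq \alpha(s)$ for all $s \in S$. Identifying functions $S \to \mathbb{H}$ with elements of the product space $\mathbb{H}^S$ equipped with the product topology (which is precisely the topology of pointwise convergence), this yields the inclusion
\[
\mathcal{P}_{\alpha,*}^\mathbb{H}(S) \subset K := \prod_{s \in S} \overline{B}(0, \alpha(s)),
\]
where $\overline{B}(0, \alpha(s))$ denotes the closed ball of radius $\alpha(s)$ in $\mathbb{H}$. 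Each such ball is compact in $\mathbb{H}\cong\mathbb{R}^4$, so Tychonoff's theorem guarantees that the ambient set $K$ is compact.

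Next, I would verify that $\mathcal{P}_{\alpha,*}^\mathbb{H}(S)$ is closed in $K$, which reduces to showing that its defining conditions survive pointwise limits. The normalization $\phi(e) = 1$ is clearly preserved. Positive definiteness, as encoded by \eqref{Eq-Def-PDF}, is a conjunction of inequalities each of which involves only finitely many coordinates of $\phi$ and is therefore closed under pointwise convergence. Finally, every $\phi \in K$ automatically satisfies $|\phi(s)| \leq \alpha(s)$ pointwise, so $\alpha$-boundedness (with constant $C = 1$) comes for free and no separate verification is required.

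Combining the two steps, $\mathcal{P}_{\alpha,*}^\mathbb{H}(S)$ appears as a closed subset of the compact Hausdorff space $K$, hence it is itself compact in the pointwise convergence topology. I anticipate no substantial obstacle: Theorem \ref{thm-inequality} supplies precisely the uniform bound that makes the product-space embedding land in a compact region, and after that the argument mirrors the classical complex-valued proof verbatim, with the four-dimensional closed ball in $\mathbb{H}$ replacing the closed disk in $\mathbb{C}$.
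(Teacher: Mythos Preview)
Your proposal is correct and follows essentially the same approach the paper indicates: the paper merely states that the corollary follows immediately from Tychonoff's theorem, and your argument—using Theorem~\ref{thm-inequality} to embed $\mathcal{P}_{\alpha,*}^\mathbb{H}(S)$ into the compact product $\prod_{s\in S}\overline{B}(0,\alpha(s))$ and then checking closedness—is precisely the intended fleshing-out of that remark.
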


\subsection{RKHS associated with $\phi$}
%attach some information about our former paper to show the importance of RKHS,
%attach some information about other contributions to show the importance of RKHS.
Assume that $S$ is a semigroup  with involution  and the function $\phi:S\mapsto \mathbb{H}$ is positive definite. Denote the set of all quaternion-valued functions on $S$ by $\mathbb{H}^S$. We would like to mention that we only focus on the conventional right quaternion-linear structure in $\mathbb{H}^S$ given by the pointwise scalar multiplication on functions from the right side, which is to say that for $f\in \mathbb{H}^S$ and $q\in\mathbb{H}$, $fq$ is defined by
$$(fq)(s)=f(s)q \quad \text{for} \quad s\in S. $$
In the quaternionic setting, due to some technical reasons a right quaternion-linear space may be endowed with an extra well-chosen left linear structure (see, e.g., \cite{Alpay-2016-1,Alpay-2016-2}). Sometimes, the situation is different from what is usual in the commutative case. So one should be careful when considering the both-side quaternionic linearity.

Let us consider the right quaternion-linear subspace $H_\phi$ of $\mathbb{H}^S$ generated by the functions $\{\phi_s\}_{s\in S}$ with $\phi_s(t):=\phi(t^*\circ s)$. We furnish $H_\phi$ with a quaternion-valued scalar product defined as
$$\langle\sum_{s}\phi_s q_s, \sum_{t}\phi_t p_t\rangle:= \sum_{s,t} \overline{p_t}\phi(t^*\circ s) q_s$$
where the indices $s$ and $t$ run over two arbitrary finite nonempty subsets of $S$ respectively, and $q_s, p_t\in\mathbb{H}$. It is obvious that this product satisfies
 \begin{eqnarray*}
  \langle f, g\rangle&=& \overline{\langle g, f\rangle}, \\
  \langle f+g,h\rangle&=&\langle f,h\rangle+\langle g,h\rangle, \\
  \langle fp, g\rangle&=&\langle f, g\rangle p, \\
  \langle f, gp\rangle&=&\overline{p}\langle f, g\rangle, \\
 \end{eqnarray*}
for all $f, g, h\in H_\phi$ and $p\in\mathbb{H}$.
In addition, since the function $\phi$ is positive definite, the inequality  $\langle f, f\rangle\geq0$ always holds. Furthermore, the Cauchy-Schwarz inequality yields that for any $f=\sum_{s}\phi_s q_s\in H_\phi$, and $t\in S$,
$$\lvert f(t)\rvert= \lvert\sum_{s}\phi(t^*\circ s) q_s\rvert=\lvert\langle f, p_t\rangle\rvert\leq \langle f,f\rangle^{1/2}\langle \phi_t,\phi_t\rangle^{1/2}, $$ which implies
$$\langle f, f\rangle=0 \quad\text{iff}\quad f=0. $$
Therefore, $\langle\cdot, \cdot\rangle: H_\phi\times H_\phi\mapsto\mathbb{H}$ is a quaternionic inner product (see, e.g., \cite{Alpay-2016-1,Viswanath-1971}) on $H_\phi$. We call the completion of $H_\phi$ as the reproducing kernel Hilbert space (RKHS) associated with $\phi$, denoted by $\mathcal{H}_\phi$. Because all the quaternion-valued functionals $\Lambda_s: f \mapsto f(s)$ with $s\in S$ are continuous on $H_\phi$, the norm topology on $H_\phi$ must be stronger than or equal to the pointwise convergence topology. Accordingly, its completion $\mathcal{H}_\phi$ can be treated as a subset of  $\mathbb{H}^S$, i.e., the space of all quaternion-valued functions on $S$. In other words, every element in the completion is a concrete function as well.

There is an alternative way to construct the RKHS:
Let $H$ be the family of quaternion-valued functions on $S$ with finite supports. Obviously, the positive definite function $\varphi$ can induce a (possibly degenerate) inner product:
 $$\langle h,k\rangle:=\sum_{s,t\in S}\overline{k(t)}\varphi(t^*\circ s)h(s), $$
for all $h,k\in H$. Quotienting  $H$ by the subspace of functions with zero norm  eliminates the degeneracy. Then taking the completion gives a quaternionic Hilbert space which is isomorphic to $\mathcal{H}_\phi$ via the mapping $h\mapsto f=\sum_{s}\phi_sh(s)$.

A natural representation of $S$  on the quaternionic pre-Hilbert space $H_\phi$ is given as
\begin{equation}\label{eq-unitary-rep}
  \lambda(s)\left(\sum_{t}\phi_t q_t\right):=\sum_{t}\phi_{s\circ t} q_t \quad\text{for}\quad s\in S,
\end{equation}
where the index $t$ runs over an arbitrary nonempty subset of $S$, and $q_t$ is a quaternion for every $t$. Theoretically, this representation contains all the information about $\phi$. One can easily observe that
\begin{equation}\label{eq-representation}
  \phi(s)=\langle \lambda(s)\phi_e,\phi_e\rangle \quad\text{for}\quad s\in S.
\end{equation}
Moreover, the norm-boundedness of $\lambda$ is equivalent with the exponential boundedness of $\phi$ as shown below.

\begin{theorem}\label{thm-boundedness}
  Let $\phi$ ba a quaternion-valued positive definite function on $S$, and $\lambda$ be the representation as above. Then $\phi$ is exponentially bounded if and only if $\lambda(s)$ is a bounded operator on $H_\phi$ for all $s\in S$.
\end{theorem}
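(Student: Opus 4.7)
The plan is to prove each direction by relating the operator norm $\|\lambda(s)\|$ directly to a pointwise bound on $\phi$, using the reproducing identity \eqref{eq-representation} together with the auxiliary positive definite function furnished by Theorem~\ref{thm-positive-definiteness}.

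For the forward direction, suppose $|\phi(s)|\leq C\alpha(s)$ for some absolute value $\alpha$. For a generic $f=\sum_t \phi_t q_t\in H_\phi$ I would expand
\[
\|\lambda(s)f\|^2 = \sum_{t,u}\overline{q_u}\,\phi(u^*\circ s^*\circ s\circ t)\,q_t = \psi(s^*\circ s),
\]
where $\psi(x):=\sum_{t,u}\overline{q_u}\phi(u^*\circ x\circ t)q_t$ is positive definite on $S$ by Theorem~\ref{thm-positive-definiteness}. A routine pointwise estimate using $|\phi|\le C\alpha$ and submultiplicativity of $\alpha$ shows $\psi$ is itself $\alpha$-bounded, so Theorem~\ref{thm-inequality} applies and gives
\[
\psi(s^*\circ s)\leq \psi(e)\,\alpha(s^*\circ s)\leq \|f\|^2\alpha(s)^2,
\]
since $\psi(e)=\|f\|^2$ and $\alpha(s^*\circ s)\leq\alpha(s)\alpha(s^*)=\alpha(s)^2$. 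Taking the supremum over unit $f$ yields $\|\lambda(s)\|\leq\alpha(s)<\infty$.

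For the converse, assume every $\lambda(s)$ is bounded and set $\alpha(s):=\|\lambda(s)\|$. The multiplicative law $\lambda(s\circ t)=\lambda(s)\lambda(t)$ reads off \eqref{eq-unitary-rep}, which, combined with the fact that $\lambda(e)$ is the identity on the nontrivial space $H_\phi$ (we may assume $\phi\not\equiv 0$), yields $\alpha(e)=1$ and submultiplicativity $\alpha(s\circ t)\leq\alpha(s)\alpha(t)$. The adjoint identity $\lambda(s)^{*}=\lambda(s^{*})$ follows from a direct computation on the generators $\phi_t$ using $(s\circ t)^{*}=t^{*}\circ s^{*}$, and gives $\alpha(s^{*})=\alpha(s)$. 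Hence $\alpha$ is an absolute value. Cauchy--Schwarz applied to \eqref{eq-representation} then produces
\[
|\phi(s)|=|\langle\lambda(s)\phi_e,\phi_e\rangle|\leq\|\lambda(s)\|\,\|\phi_e\|^{2}=\phi(e)\,\alpha(s),
\]
which is the desired exponential bound.

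The main (but still modest) technical point to watch is the adjoint identity $\lambda(s)^{*}=\lambda(s^{*})$ itself: quaternion noncommutativity forces one to track the order of scalars, but since $\lambda(s)$ moves only the indices $t\mapsto s\circ t$ without disturbing the coefficients $q_t$, and the inner product is right-linear in the first argument and conjugate-linear in the second, the standard verification transplants cleanly to the quaternionic setting.
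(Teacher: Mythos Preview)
Your proposal is correct and follows essentially the same route as the paper: for exponential boundedness $\Rightarrow$ boundedness of $\lambda(s)$ you use the auxiliary positive definite function $\psi$ from Theorem~\ref{thm-positive-definiteness} and apply Theorem~\ref{thm-inequality} to bound $\|\lambda(s)f\|^2=\psi(s^*\circ s)$, exactly as the paper does; for the converse you set $\alpha(s):=\|\lambda(s)\|$ and bound $\phi$ via \eqref{eq-representation}, which is again the paper's argument. The only difference is cosmetic: you spell out the adjoint identity $\lambda(s)^*=\lambda(s^*)$ to justify $\alpha(s^*)=\alpha(s)$, whereas the paper simply asserts that $\alpha$ is an absolute value.
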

\begin{proof}First, we show  the sufficiency. Assume $\lambda(s)$ is a bounded operator on $H_\phi$ for all $s\in S$. It is easy to verify that $\alpha(s):=\|\lambda(s)\|$ is an absolute value. And by \eqref{eq-representation} we obtain
$$|\phi(s)|\leq\|\phi_e\|^2\|\lambda(s)\|. $$
Hence, $\phi$ is bounded w.r.t. $\alpha$, thereby exponentially bounded.

Next, we prove the necessity. Suppose that $\phi$ is exponentially bounded. Then there exists an absolute value  $\alpha$ which dominates $\phi$. Take an arbitrary function $f=\sum_{i=1}^{n}\phi_{t_i}p_i\in H_\phi$, and define
$$\varphi(s):= \sum_{1\leq j,k\leq n} \overline{p_j}\phi(t_j^*\circ s \circ t_k)p_k \quad\text{for}\quad s\in S.$$

Firstly, Theorem \ref{thm-positive-definiteness} tells $\varphi$ must be positive definite.
Secondly, Theorem \ref{thm-inequality}, together with the last two conditions in Definition \ref{def-absolute-value}, yields the inequality  $|\phi(t_j^*\circ s \circ t_k)|\leq \phi(e)\alpha(t_j)\alpha(t_k)\alpha(s)$. We thus derive that
$$|\varphi(s)|\leq \left(\phi(e)\sum_{1\leq j,k\leq n}|p_j||p_k|\alpha(t_j)\alpha(t_k)\right)\alpha(s) $$
holds for all  $s\in S$, which means $\varphi$ is also bounded w.r.t. $\alpha$.
Then applying Theorem \ref{thm-inequality} again  we have
$$|\varphi(s)|\leq \varphi(e)\alpha(s)=\|f\|^2\alpha(s). $$
It follows that
$$\|\lambda(s)f\|^2= \langle\lambda(s^*\circ s)f,f\rangle=\varphi(s^*\circ s)\leq \|f\|^2\alpha(s^*\circ s)\leq\|f\|^2\alpha(s)^2. $$
Therefore, $\lambda(s)$ is bounded for all $s$.
\end{proof}

Based on the result above, we know that when $\phi$ is exponentially bounded, the representation $\lambda$ has a unique bounded extension to the quaternionic Hilbert space $\mathcal{H}_\phi$. More explicitly, the extension of $\lambda(s)$ coincides with the left shifting operator $L_{s^*}$ given as
$$L_{s^*}(f)(t):=f(s^*\circ t),$$
for all $f\in \mathcal{H}_\phi$ and $t\in S$.

\subsection{relations between complex and quaternionic positive definiteness}
It is not difficult to show that a complex positive definite function must be quaternionic positive definite.
\begin{theorem}\label{thm-equiv-c-h-pd}
  Let $\phi$ be a $\mathbb{C}_I$-valued function on $S$. Then $\phi$ is complex positive definite if and only if it is quaternionic positive definite.
\end{theorem}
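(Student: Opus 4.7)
The plan is to prove both directions, the easy one by restriction and the nontrivial one by a symplectic-type decomposition of the quaternion test coefficients into two $\mathbb{C}_I$-components.

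The \emph{only if} direction is immediate: complex scalars $q_i\in\mathbb{C}_I\subset\mathbb{H}$ are admissible test coefficients in the defining inequality \eqref{Eq-Def-PDF}, so quaternionic positive definiteness trivially implies complex positive definiteness for a $\mathbb{C}_I$-valued function.

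For the \emph{if} direction, I would start by fixing an imaginary unit $J\in\mathbb{H}$ with $J^2=-1$ and $IJ=-JI$ (any unit quaternion orthogonal in $\mathbb{R}^4$ to $1$ and $I$ works). This yields the symplectic decomposition $\mathbb{H}=\mathbb{C}_I\oplus\mathbb{C}_I J$, so each test coefficient writes uniquely as $q_i=z_i+w_iJ$ with $z_i,w_i\in\mathbb{C}_I$. The key algebraic identity I will exploit is $Jz=\bar z J$ for every $z\in\mathbb{C}_I$, together with $\overline{z+wJ}=\bar z-wJ$.

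Using these, I would expand
\begin{equation*}
\sum_{i,j}\overline{q_i}\,\phi(s_i^*\circ s_j)\,q_j
=\sum_{i,j}(\bar z_i-w_iJ)\,\phi_{ij}\,(z_j+w_jJ),
\end{equation*}
where $\phi_{ij}:=\phi(s_i^*\circ s_j)\in\mathbb{C}_I$. Pushing $J$ to the right by the anticommutation relation and using $J^2=-1$ should split the expression into a ``$\mathbb{C}_I$-part'' and a ``$\mathbb{C}_I J$-part'' of the form
\begin{equation*}
\Bigl(\sum_{i,j}\bar z_i\phi_{ij}z_j+\sum_{i,j}w_i\overline{\phi_{ij}}\,\bar w_j\Bigr)
+\Bigl(\sum_{i,j}\bar z_i\phi_{ij}w_j-\sum_{i,j}w_i\overline{\phi_{ij}}\,\bar z_j\Bigr)J.
\end{equation*}
The first bracket is nonnegative real: the first summand is $\ge 0$ by the complex positive definiteness of $\phi$, and the second is $\ge 0$ because $\overline{\phi}$ is also complex positive definite (conjugating the defining inequality), applied to the coefficients $\bar w_i$. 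The second bracket should vanish identically: swapping indices in the second sum and invoking the hermitian property from Theorem \ref{thm-basic-properties}, namely $\phi_{ji}=\overline{\phi_{ij}}$, together with the commutativity of multiplication inside $\mathbb{C}_I$, rewrites it as the negative of the first sum.

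The main (but still modest) obstacle is simply bookkeeping of the noncommutative manipulations, especially tracking where conjugation is induced by pushing $J$ across a $\mathbb{C}_I$-scalar; I expect no deeper difficulty, since the argument mirrors the standard complex-to-quaternion lifting used in operator theory over $\mathbb{H}$.
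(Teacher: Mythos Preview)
Your proof is correct and follows essentially the same route as the paper's: write each test coefficient as $q_i=z_i+w_iJ$ via the splitting $\mathbb{H}=\mathbb{C}_I\oplus\mathbb{C}_IJ$, expand, show the $\mathbb{C}_I$-part is nonnegative by complex positive definiteness, and cancel the $J$-part using the hermitian property. Two minor corrections: you have the ``if'' and ``only if'' labels reversed (in ``complex PD iff quaternionic PD'' the \emph{only if} direction is the nontrivial implication complex $\Rightarrow$ quaternionic), and when you invoke the hermitian property you should note it follows already from complex positive definiteness, since Theorem~\ref{thm-basic-properties} as stated applies to quaternionic positive definite functions, which is what you are trying to establish.
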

Here $\mathbb{C}_I$ stands for an arbitrary complex slice of $\mathbb{H}$, namely $\mathbb{C}_I:=\mathbb{R}\oplus I\mathbb{R}$ with $I$ being an imaginary unit in $\mathbb{H}$ \cite{Gurlebeck-2008}.
\begin{proof}
Evidently, the quaternionic positive definiteness is stronger than the complex positive definiteness. Thus we only need to  verify the necessity.

Suppose that $\phi: S\mapsto\mathbb{C}_I$ is complex positive definite, which is to say that for any $c_1,\cdots,c_n\in\mathbb{C}_I$ and $s_1,\cdots,s_n\in\mathbb{C}_I$, the following inequality holds:
\begin{equation}\label{eq-proof-thm-equiv-c-h-pd}
  \sum_{i,j=1}^{n}\overline{c_i}c_j\phi(s_i^*\circ s_j)\geq 0.
\end{equation}
Take an imaginary unit $J$ in the quaternion algebra such that $I\perp J$, or equivalently $IJ=-JI$. Notice that any $q\in\mathbb{H}$ can be expressed as
$$q=c+c'J\quad\text{with}\quad c,c'\in\mathbb{C}_I.$$
Hence, for any $q_1,\cdots,q_n\in\mathbb{H}$ and $s_1,\cdots,s_n\in\mathbb{C}_I$, we can decompose the summary $\sum_{i,j=1}^{n}\overline{q_i}\phi(s_i^*\circ s_j)q_j$ as
$$\sum_{i,j=1}^{n}\overline{c_i}c_j\phi(s_i^*\circ s_j)+\sum_{i,j=1}^{n}\overline{c'_i}c'_j\phi(s_i^*\circ s_j)-J\sum_{i,j=1}^{n}\overline{c'_i}c_j\phi(s_i^*\circ s_j)+J\sum_{i,j=1}^{n}\overline{c'_j}c_i\overline{\phi(s_i^*\circ s_j)}, $$
where $q_i=c_i+c'_iJ$ with $c_i, c'_i\in\mathbb{C}_I$ $(i=1,\cdots,n)$.
\eqref{eq-proof-thm-equiv-c-h-pd} indicates the first two terms are nonnegative. And the last two terms eliminate each other since $\phi$ is hermitian, i.e., $\phi(s^*)=\overline{\phi(s)}$. Consequently, we obtain
$$\sum_{i,j=1}^{n}\overline{q_i}\phi(s_i^*\circ s_j)q_j\geq 0,$$
which means $\phi$ is quaternionic positive definite.
\end{proof}

As mentioned in the proof above, any $q\in\mathbb{H}$ can be expressed as
$$q=c+c'J\quad\text{with}\quad c,c'\in\mathbb{C}_I,$$ where $J$ is an arbitrary imaginary unit vertical to $I$. Moreover, the value of $c$ is irrelative with the choice of $J$. We call $c$ the projection of $q$ on the complex slice $\mathbb{C}_I$. For convenience, we denote the mapping $q\mapsto c$ by $P_I$.

\begin{theorem}\label{Thm-qpd-to-cpd}
  Let $\phi$ be a quaternion-valued positive definite function on $S$. Then $P_I\phi$, i.e., the projection of $\phi$ on $\mathbb{C}_I$ is complex positive definite  for any imaginary unit $I\in\mathbb{H}$.
\end{theorem}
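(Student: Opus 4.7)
The plan is to mimic, in reverse, the decomposition argument used in the proof of Theorem \ref{thm-equiv-c-h-pd}. Fix an imaginary unit $J\in\mathbb{H}$ with $J\perp I$, so that $\mathbb{H}=\mathbb{C}_I\oplus\mathbb{C}_I J$ as a real vector space. Decompose $\phi$ accordingly as $\phi=\phi_1+\phi_2 J$ where $\phi_1:=P_I\phi$ and $\phi_2:S\to\mathbb{C}_I$. The goal is to verify that for arbitrary $c_1,\dots,c_n\in\mathbb{C}_I$ and $s_1,\dots,s_n\in S$ one has
$$\sum_{i,j=1}^n\overline{c_i}\,\phi_1(s_i^*\circ s_j)\,c_j\geq 0.$$

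The key step is to apply the hypothesis that $\phi$ is quaternionic positive definite to the special choice of coefficients $q_i:=c_i\in\mathbb{C}_I\subset\mathbb{H}$, which yields
$$\sum_{i,j=1}^n\overline{c_i}\,\phi(s_i^*\circ s_j)\,c_j\geq 0.$$
Substituting $\phi=\phi_1+\phi_2 J$ and using the commutation identity $Jc=\overline{c}J$ valid for every $c\in\mathbb{C}_I$ (which follows from $IJ=-JI$), this sum splits as
$$\left(\sum_{i,j}\overline{c_i}\,\phi_1(s_i^*\circ s_j)\,c_j\right)+\left(\sum_{i,j}\overline{c_i}\,\phi_2(s_i^*\circ s_j)\,\overline{c_j}\right)J.$$
The first summand lies in $\mathbb{C}_I$ and the second in $\mathbb{C}_I J$. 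Since the total is a nonnegative real number and the decomposition $\mathbb{H}=\mathbb{C}_I\oplus\mathbb{C}_I J$ is direct, the $\mathbb{C}_I$-component equals the total (hence is a nonnegative real) while the $\mathbb{C}_I J$-component vanishes. The first conclusion is exactly the complex positive definiteness of $\phi_1=P_I\phi$.

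There is no real obstacle here; the only point requiring care is the algebraic manipulation $Jc=\overline{c}J$ and the fact that the two pieces $\mathbb{C}_I$ and $\mathbb{C}_I J$ are $\mathbb{R}$-linearly independent in $\mathbb{H}$, so that equating real parts and the component along $J$ separately is legitimate. As a byproduct, the argument also shows that the auxiliary quantity $\sum_{i,j}\overline{c_i}\,\phi_2(s_i^*\circ s_j)\,\overline{c_j}$ vanishes for all choices of $c_i$ and $s_i$, an identity satisfied by the ``off-slice'' component $\phi_2$ which may be of independent interest but is not needed for the stated theorem.
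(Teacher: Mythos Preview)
Your argument is correct. The approach differs slightly from the paper's: you apply quaternionic positive definiteness \emph{once} with coefficients $c_i\in\mathbb{C}_I$ and then read off the $\mathbb{C}_I$-component of the resulting nonnegative real, whereas the paper uses the averaging formula $P_I(q)=\tfrac{1}{2}(q-IqI)$ and applies quaternionic positive definiteness \emph{twice}, once with coefficients $c_i$ and once with coefficients $c_iI$, obtaining two nonnegative sums whose average is the desired quantity. The two routes are close cousins (conjugation by $I$ acts as $+1$ on $\mathbb{C}_I$ and $-1$ on $\mathbb{C}_IJ$, so the paper's averaging is precisely projection onto $\mathbb{C}_I$), but your version is arguably more economical and yields the extra identity $\sum_{i,j}\overline{c_i}\,\phi_2(s_i^*\circ s_j)\,\overline{c_j}=0$ as a free byproduct. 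The paper's version, on the other hand, avoids choosing an auxiliary $J$ and makes the positivity manifest as a sum of two nonnegative terms rather than via a projection argument.
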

\begin{proof}
Note that the equality
$$P_I(q)=\frac{q-IqI}{2}$$
is valid for any quaternion $q$, and any imaginary unit $I$. Thus for any  $c_1,\cdots,c_n\in\mathbb{C}_I$ and $s_1,\cdots,s_n\in\mathbb{C}_I$, the summation
$$
\sum_{i,j=1}^{n}\overline{c_i}c_jP_I\phi(s_i^*\circ s_j)
$$
can be split into two parts
$$
1/2 \sum_{i,j=1}^{n}\overline{c_i}\phi(s_i^*\circ s_j)c_j+1/2 \sum_{i,j=1}^{n}\overline{c_iI}\phi(s_i^*\circ s_j)c_jI, \\
$$
and thereby is nonnegative  owing to  the quaternionic positive definiteness of $\phi$.
In conclusion, $P_I\phi$ is complex positive definite for any imaginary unit $I\in\mathbb{H}$.
\end{proof}
The reverse of the theorem above is not true, that is, $\phi:S\mapsto\mathbb{H}$  need not necessarily be positive definite, when $P_I\phi:S\mapsto\mathbb{C}_I$ is positive definite for all $I$.

\section{Characterizations for quaternion-valued positive definite functions}
As mentioned at the beginning of Subsection \ref{subsec-pdf},
On an abelian group, there exist two natural involutions, the identical involution $s^*=s$ and the inverse involution $s^*=-s$. Correspondingly, we get two different types of positive definiteness for quaternion-valued functions on abelian groups. One is called positive definiteness in the semigroup sense, applying to the case when the group is endowed with the identical involution. The other is called positive definiteness in the group sense, which is suitable when the inverse involution is equipped.

If a quaternion-valued function $\phi$ is positive definite in the semigroup sense, then Theorem \ref{thm-basic-properties} yields that $\phi(s)=\phi(s^*)=\overline{\phi(s)}$ is valid on the whole group. So the function $\phi$ must be real valued. It means there is essentially no such thing as a quaternion-valued positive definite function in the semigroup sense. Besides, there already exists a quite complete theory about the real valued positive definite functions on abelian groups. For these reasons, we only focus on the positive definiteness in the group sense.
And all the positive definite functions hereinafter appearing, by default, are positive definite in the group sense.

Given an arbitrary quaternion-valued positive definite function $\phi$ on an abelian group $G$, it is not difficult to verify that $\rvert\phi(g)\lvert\leq \phi(0)$ holds for all $g\in G$, which is a direct generalization  of the boundedness of complex-valued positive definite functions on abelian groups to the quaternionic case via Theorem \ref{Thm-qpd-to-cpd}. Thus we see every quaternion-valued positive definite function on an abelian group is dominated by the constant absolute value $1$, thereby exponentially bounded.

\subsection{characterizations for quaternion-valued positive definite functions on the group of integers}
We start with one of the simplest cases when the abelian group is $\mathbb{Z}$. All the characterizations in this part shall be given in explicit forms. In addition, a new phenomenon will be demonstrated by certain examples.

Assume $\phi$ is a quaternion-valued positive definite function on $\mathbb{Z}$, as shown above it is spontaneously exponentially bounded. Then Theorem \ref{thm-boundedness} says the representation $\lambda$ of $\mathbb{Z}$ defined as \eqref{eq-unitary-rep} with the semigroup $S$ replaced by the group $\mathbb{Z}$, can be uniquely extended to the RKHS associated with $\phi$. Furthermore, the extension of $\lambda(n)$, $n\in \mathbb{Z}$, is nothing but the shifting operator $L_{n}$ given by
$$L_{n}(f)(m):=f(m-n),$$
for all $f$ in the RKHS and all $m\in \mathbb{Z}$.
In an unpublished work of ours, we have established a spectral characterization of  quaternion-valued positive definite functions on the group of real numbers  based on the S-functional calculus. By the same method as employed in the former work, we can obtain the following result on the group of integers:

There exists a unique pair of a resolution of the identity $E$ on the set $\mathbb{S}^+:=\{s=e^{it}\mid t\in [0,\pi]\}$ and a bounded operator $J$ on the RKHS, subject to the conditions $EJ=JE$ and $J^2=-1$,  such that
$$\lambda(n)=\int_{\mathbb{S}^+}\Re(s^n)dE(s)+J\int_{\mathbb{S}^+}\Im(s^n)dE(s)$$ holds for any integer $n$, where $\Re(s^n)$ and $\Im(s^n)$ stand for the real part and the imaginary part of $s^n$, respectively. This is a quit interesting result in our own opinion. And one can easily see that an integral characterization for the function $\phi$ follows naturally since Equation \eqref{eq-representation} indicates $\phi(g)=\langle \lambda(g)\phi_0,\phi_0\rangle$. But unfortunately, this approach can not solve any of the major questions stated in our introduction on the convex structure of quaternion-valued positive definite functions. So we are forced to find another way to achieve our aims. And we leave the further discussion on the above approach to the possible future work.

We now turn to the fist main question for the group of integers: `how to characterize the extreme boundary of the convex set of normalized quaternion-valued positive definite functions'.

Recall that $\mathbb{S}$ stands for the group of all unit quaternions with the composition being the multiplication inherited from the quaternion algebra, and the convex set of normalized positive definite functions on an abelian group $G$ is denoted by $\mathcal{P}_*^\mathbb{H}(G)$. The theorem below is concerned with the extreme elements in $\mathcal{P}_*^\mathbb{H}(\mathbb{Z})$.

\begin{theorem}\label{Thm-ext-boundary-on-Z}
A function $\phi:\mathbb{Z}\to\mathbb{H}$ is an extreme element in $\mathcal{P}_*^\mathbb{H}(\mathbb{Z})$ if and only if it is a homomorphism from $\mathbb{Z}$ to the sphere group $\mathbb{S}$.
\end{theorem}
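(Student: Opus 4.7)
The theorem is an ``if and only if'' statement, so I would split the proof into two implications. The reverse direction (homomorphism implies extreme) rests on strict convexity of $|\cdot|$ on $\mathbb{H}\cong\mathbb{R}^4$ together with a multiplicativity lemma. For $q\in\mathbb{S}$ the function $\chi_q(n):=q^n$ lies in $\mathcal{P}_*^{\mathbb{H}}(\mathbb{Z})$ by direct computation: since $q$ commutes with $\overline{q}$, one has $\sum_{i,j}\overline{q_i}\chi_q(s_j-s_i)q_j = \bigl|\sum_i q^{s_i}q_i\bigr|^2\geq 0$. If $\chi_q=t\phi_1+(1-t)\phi_2$ with $t\in(0,1)$, then $|\phi_i(1)|\leq\phi_i(0)=1$, and strict convexity of the Euclidean unit ball applied to $q=t\phi_1(1)+(1-t)\phi_2(1)$ (which has $|q|=1$) forces $\phi_1(1)=\phi_2(1)=q$.

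I would then establish the following multiplicativity lemma: if $\psi\in\mathcal{P}_*^{\mathbb{H}}(\mathbb{Z})$ satisfies $|\psi(k)|=1$, then $\psi(n+k)=\psi(k)\psi(n)$ for every $n$. The proof examines the quaternion-positive $3\times 3$ matrix $M=[\psi(s_j-s_i)]$ with $s=(0,k,k+n)$. Its upper-left $2\times 2$ principal block has ``null vector'' $v=(-\psi(k),1)^{\top}$ since $|\psi(k)|=1$. Testing positive-definiteness of $M$ against $(v^{\top},\epsilon)$ for a free quaternion $\epsilon$ yields the quadratic form $2\,\Re(\eta\epsilon)+|\epsilon|^2$, where $\eta=-\overline{\psi(k)}\psi(k+n)+\psi(n)$; choosing $\epsilon$ a small real multiple of $-\overline{\eta}$ drives this expression negative unless $\eta=0$, and $\eta=0$ is equivalent to $\psi(k+n)=\psi(k)\psi(n)$ after multiplying through by $\psi(k)$. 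Applied with $k=1$ to $\phi_1,\phi_2$ above, the lemma gives $\phi_i(n)=q^n=\chi_q(n)$, completing the reverse direction.

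For the forward direction, assume $\phi$ is extreme. Apply Theorem~\ref{thm-positive-definiteness} with $t_1=0$, $t_2=1$, $p_1=1$, $p_2=t\in\mathbb{R}$ to produce the positive definite function
\begin{equation*}
\varphi_t(n) \;=\; (1+t^2)\phi(n)+t\bigl(\phi(n+1)+\phi(n-1)\bigr),
\end{equation*}
with $\varphi_t(0)=(1+t^2)+2t\,\Re\phi(1)>0$ for sufficiently small $|t|$. Since $\varphi_t+\varphi_{-t}=2(1+t^2)\phi$ and $\varphi_t(0)+\varphi_{-t}(0)=2(1+t^2)$, we obtain $\phi=\lambda_t(\varphi_t/\varphi_t(0))+\lambda_{-t}(\varphi_{-t}/\varphi_{-t}(0))$ with $\lambda_{\pm t}=\varphi_{\pm t}(0)/[2(1+t^2)]$ positive and summing to one. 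Extremality forces $\varphi_t(n)=\varphi_t(0)\phi(n)$ identically; cancelling the $t$-odd contributions yields the scalar recurrence
\begin{equation*}
\phi(n+1)+\phi(n-1) \;=\; 2\,\Re\phi(1)\cdot\phi(n).
\end{equation*}

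Writing $\cos\theta=\Re\phi(1)$ with $\theta\in[0,\pi]$ and solving componentwise over $\mathbb{R}$ with $\phi(0)=1$ and $\phi(1)=q$, I obtain $\phi(n)=\cos(n\theta)+B\sin(n\theta)$ where $B=(\Im q)/\sin\theta$ is pure imaginary with $|B|^2=|\Im q|^2/\sin^2\theta\leq 1$ (the edge case $\sin\theta=0$ immediately forces $q=\pm 1$ and $\phi(n)=q^n$). If $|B|=1$ then $B^2=-1$ and de Moivre's identity gives $\phi(n)=(\cos\theta+B\sin\theta)^n$, a homomorphism into $\mathbb{S}$. If $|B|<1$, let $\mathbf{I}=B/|B|$ (any imaginary unit when $B=0$) and $q_\pm=\cos\theta\pm\mathbf{I}\sin\theta\in\mathbb{S}$; one checks directly that
\begin{equation*}
\phi(n) \;=\; \frac{1+|B|}{2}\,q_+^{\,n} + \frac{1-|B|}{2}\,q_-^{\,n}
\end{equation*}
is a nontrivial convex combination of two distinct characters in $\mathcal{P}_*^{\mathbb{H}}(\mathbb{Z})$, contradicting extremality. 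The principal technical hurdle I anticipate is the quaternionic multiplicativity lemma and the careful extraction of the recurrence from extremality; once these are in place, solving a real linear recurrence and decomposing the resulting trigonometric expression into quaternionic characters is routine.
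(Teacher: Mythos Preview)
Your argument is correct, but it diverges from the paper's proof in both directions, and the differences are worth noting.

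For the sufficiency, your multiplicativity lemma is valid and interesting in its own right, but it is a detour here: since $\lvert\chi_q(n)\rvert=\lvert q\rvert^{n}=1$ for \emph{every} $n$, strict convexity of the unit ball applied pointwise already forces $\phi_1(n)=\phi_2(n)=\chi_q(n)$ for all $n$, which is exactly how the paper handles it. Your route reaches the same conclusion via a sharper local-to-global principle (one unimodular value propagates), but the simpler pointwise argument suffices.

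For the necessity, the two proofs share the first move---extracting the real scalar recurrence $\phi(n+1)+\phi(n-1)=2\Re\phi(1)\,\phi(n)$ from extremality via auxiliary positive definite functions---but then part ways. You solve this second-order recurrence explicitly over $\mathbb{H}\cong\mathbb{R}^4$, obtain $\phi(n)=\cos(n\theta)+B\sin(n\theta)$ with $B$ pure imaginary and $\lvert B\rvert\le 1$, and dispose of the case $\lvert B\rvert<1$ by exhibiting $\phi$ as a nontrivial convex combination of two characters $q_\pm^{\,n}$. The paper instead derives the recurrence for \emph{all} shifts $m$, uses it inductively to confine the range of $\phi$ to a single complex slice $\mathbb{C}_I$, and then introduces a second pair of auxiliary functions $\kappa_m^{\pm}\phi$ (which are only seen to be positive definite once $\phi$ is $\mathbb{C}_I$-valued) to upgrade the recurrence to full multiplicativity. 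Your approach is more elementary and self-contained for $\mathbb{Z}$, exploiting that a linear recurrence on a cyclic group is completely solvable; the paper's approach is engineered so that its two key steps (range in a complex slice, then $\kappa$-type perturbations) generalize verbatim to arbitrary abelian groups, which is exactly what happens in the following subsection.
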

More explicitly, every extreme element has the form of $\phi(n)=s^{n}$ with $s\in\mathbb{S}$.

\begin{proof}
First we show the sufficiency. Assume $\phi$ is a homomorphism from $\mathbb{Z}$ to the sphere group $\mathbb{S}$, then $\phi(n-m)=\phi(n)\phi(m)^{-1}=\phi(n)\overline{\phi(m)}$ holds for all integers $n$ and $m$. It follows that the homomorphism $\phi$ is normalized, i.e., satisfying the condition $\phi(0)=1$, and positive definite; which means it belongs to the family $\mathcal{P}_1^\mathbb{H}(\mathbb{Z})$. Moreover, if $\phi$ is a convex combination of two functions $f$ and $g$ in $\mathcal{P}_*^\mathbb{H}(\mathbb{Z})$, namely $\phi=\lambda f + (1-\lambda) g$ with the constant $\lambda\in[0,1]$; then based on the following facts:
\begin{itemize}
  \item[i)] $\lvert\phi(n)\rvert\equiv 1$, since $\phi(n)$ is always a unit quaternion,

  \item[ii)] $\lvert f(n)\rvert, \lvert g(n)\rvert\leq 1$ for every integer $n$, indicated by Theorem \ref{Thm-qpd-to-cpd} together with the boundedness of complex-valued positive definite functions;
\end{itemize}
 it is not difficult to verify that both $f$ and $g$ must be identical with $\phi$. In conclusion, every homomorphism from $\mathbb{Z}$ to $\mathbb{S}$ is an extreme element in $\mathcal{P}_*^\mathbb{H}(\mathbb{Z})$.

Next we show the necessity. Suppose that  $\phi$ is an extreme element in the convex set  $\mathcal{P}_*^\mathbb{H}(\mathbb{Z})$, and for an arbitrary integer $m$ we consider two related functions
$$\tau_m^+ \phi(n):= \phi(n)+\frac{1}{2}\phi(n+m)+\frac{1}{2}\phi(n-m); $$
$$\tau_m^- \phi(n):= \phi(n)-\frac{1}{2}\phi(n+m)-\frac{1}{2}\phi(n-m). $$
We claim that both the functions above are (quaternionic) positive definite on $\mathbb{Z}$. It can be proved in a straightforward way. For any $n_1,n_2,\cdots,n_k\in \mathbb{Z}$  and any $q_1,q_2,\cdots,q_k\in\mathbb{H}$,  by the first conclusion of Theorem \ref{thm-basic-properties}  we have
\begin{eqnarray*}
  \sum_{1\leq i,j\leq k} \overline{q_i}\tau_m^{\pm} \phi(n_j-n_i)q_j
  &=& \sum_{1\leq i,j\leq k} \overline{q_i}\phi(n_j-n_i)q_j \\
  & & \pm\Re \left\{\sum_{1\leq i,j\leq k} \overline{q_i}\phi(n_j-n_i+m)q_j\right\} \\
  &=& \varphi(0)\pm\Re \varphi(m),
\end{eqnarray*}
 where
\begin{equation}\label{eq-def-varphi-proof-integer}
  \varphi(n):=\sum_{1\leq i,j\leq k} \overline{q_i}\phi(n_j-n_i+n)q_j \quad\text{for}\quad n\in\mathbb{Z};
\end{equation}
and the symbol $\Re$ means taking the real part. Theorem \ref{thm-positive-definiteness} says the function $\varphi$ defined in such a form is positive definite, then applying Theorem \ref{Thm-qpd-to-cpd} again in light of the boundedness of complex-valued positive definite functions gives
\begin{equation}\label{Eq-proof-integer-varphi-boundedness}
\lvert \varphi(m)\rvert\leq\varphi(0)\quad\text{for}\quad m\in\mathbb{Z}.
\end{equation}
Accordingly, we get
$$\sum_{1\leq i,j\leq k} \overline{q_i}\tau_m^{\pm} \phi(n_j-n_i)q_j=\varphi(0)\pm\Re \varphi(m)\geq 0,  $$
which confirms our claim. What's more, because $\phi$ is an extreme element in the convex set  $\mathcal{P}_*^\mathbb{H}(\mathbb{Z})$ as supposed and obviously it equals  a convex combination of the positive definite functions $\tau_m^{\pm}\phi$, there must exist a positive constant $C_m$ such that $\tau_m^{+}\phi=C_m\phi$. We thus obtain a vital equality:
\begin{equation}\label{Eq-integer-convex-comb}
  \phi(n+m)+\phi(n-m)=C'_m \phi(n) \quad\text{for}\quad n\in \mathbb{Z},
\end{equation}
with $C'_m$ being a real constant dependent on the parameter $m$.

Based on the above equality, we shall derive that the range of any extreme element $\phi$ in   $\mathcal{P}_1^\mathbb{H}(\mathbb{Z})$ is included by a complex slice. Recall that a complex slice $\mathbb{C}_I$ is a subalgebra of the quaternion algebra generated by an imaginary unit $I$. \eqref{Eq-integer-convex-comb} indicates that if both $\phi(n-m)$ and $\phi(n)$ are contained by the same complex slice, so be $\phi(n+m)$. Besides, $\phi(0)$ is certainly a nonnegative real number, thereby lies in every complex slice. It thus follows by induction on $n$ that all the values $\phi(0), \phi(\pm 1), \cdots, \phi(\pm n)$ must belong to the same complex slice for any positive integer $n$. As a consequence, there exists an imaginary unit $I$ such that the range of $\phi$ is included by $\mathbb{C}_I$.

Then for an arbitrary integer $m$, we define another pair of auxiliary functions $\kappa_m^{\pm}\phi$ by
$$\kappa_m^+ \phi(n):= \phi(n)+\frac{I}{2}\phi(n+m)-\frac{I}{2}\phi(n-m); $$
$$\kappa_m^- \phi(n):= \phi(n)-\frac{I}{2}\phi(n+m)+\frac{I}{2}\phi(n-m). $$
We claim that they are also (quaternionic) positive definite like $\tau_m^{\pm}\phi$. Since $\kappa_m^{\pm}\phi$ are $\mathbb{C}_I$-valued, according to Theorem \ref{thm-equiv-c-h-pd} it suffices to prove that they are (complex) positive definite. The proof of this result is quite similar with that given earlier for the  positive definiteness of the functions $\tau_m^{\pm}\phi$, and so some minor details are omitted.
For any $n_1,n_2,\cdots,n_k\in \mathbb{Z}$  and any $c_1,c_2,\cdots,c_k\in\mathbb{C}_I$, a simple calculation leads to
$$
  \sum_{1\leq i,j\leq k} \overline{c_i}c_j\kappa_m^{\pm} \phi(n_j-n_i)=\varphi^{*}(0)\pm\Re \{I\varphi^{*}(m)\},
$$
where
$$\varphi^{*}(n):=\sum_{1\leq i,j\leq k} \overline{c_i}c_j\phi(n_j-n_i+n) \quad\text{for}\quad n\in\mathbb{Z};$$
It is easy to see that $\varphi^{*}$ is just a special case of the function $\varphi$ given by \eqref{eq-def-varphi-proof-integer} when all the linear combination parameters are limited in the complex slice $\mathbb{C}_I$. Then \eqref{Eq-proof-integer-varphi-boundedness} implies
$$
  \sum_{1\leq i,j\leq k} \overline{c_i}c_j\kappa_m^{\pm} \phi(n_j-n_i)\geq 0,
$$
or equivalently, $\kappa_m^{\pm}\phi$ are (complex) positive definite. Thus our second claim is also confirmed.
Moreover, both the functions $\kappa_m^{\pm}\phi$ must be nonnegative scalar multiples of $\phi$, since $\phi$ is an extreme element in $\mathcal{P}_*^\mathbb{H}(\mathbb{Z})$, namely the set of normalized positive definite functions, and equals a convex combination of the functions $\kappa_m^{\pm}\phi$ which are also positive definite (but possibly not normalized). It follows immediately that there exists a real constant $C''_m$ depending on the parameter $m$ such that
\begin{equation*}
  \phi(n+m)-\phi(n-m)=IC''_m \phi(n) \quad\text{for}\quad n\in \mathbb{Z}.
\end{equation*}
Combining this equality and its analogue \eqref{Eq-integer-convex-comb}, we see that
\begin{equation*}
  \phi(n+m)=c(m) \phi(n) \quad\text{for}\quad n,m\in \mathbb{Z},
\end{equation*}
where $c(m)$ is a certain $\mathbb{C}_I$-valued function of the variable $m$. Setting $n=0$, we get $c(m)=\phi(m)$. Consequently, \begin{equation*}
  \phi(m+n)= \phi(m) \phi(n)\quad\text{for}\quad n,m\in \mathbb{Z},
\end{equation*}
In addition, Theorem \ref{thm-basic-properties} says every positive definite function is hermitian, thereby $\phi(-n)=\overline{\phi(n)}$. Then it is easy to see that
$$\lvert\phi(n)\rvert^2=\phi(-n)\phi(n)=\phi(0)=1\quad\text{for}\quad n\in \mathbb{Z}. $$
Therefore, $\phi$ is a homomorphism from $\mathbb{Z}$ to the sphere group $\mathbb{S}$. The proof is now completed.
\end{proof}

\begin{remark}
  The key to the proof of Theorem \ref{Thm-ext-boundary-on-Z} is to show that the range of every extreme element in
  the convex set of normalized positive definite functions on the group of integers is included by some complex slice $\mathbb{C}_I$. This result ensures the positive-definiteness of the second pair of auxiliary functions $\kappa_m^{\pm}\phi$.
\end{remark}

Theorem \ref{Thm-ext-boundary-on-Z} yields a one-to-one correspondence between the sphere group $\mathbb{S}$ and the extreme boundary of $\mathcal{P}_*^\mathbb{H}(\mathbb{Z})$, the expression of which is given as $$s\in\mathbb{S} \longmapsto \phi\in Ex \left(\mathcal{P}_*^\mathbb{H}(\mathbb{Z})\right):n\mapsto s^n. $$
Apparently this bijection is continuous with respect to the pointwise convergence topology on the extreme boundary $Ex \left(\mathcal{P}_*^\mathbb{H}(\mathbb{Z})\right)$, and thereby is a homeomorphism since its definition domain is compact and its range is Hausdorff, which means that it is legitimate  in the topological sense to identify the sphere group with the extreme boundary. Then a integral characterization of quaternionic positive definite functions on the group of integers follows immediately.

\begin{theorem}\label{Thm-characterization-Z}
  A function $\phi:\mathbb{Z}\mapsto\mathbb{H}$ is positive definite if and only if there exists a (not necessarily unique) nonnegative Radon measure $\mu$  on the sphere group $\mathbb{S}$ such that
\begin{equation}\label{Eq-thm-characterization-on-Z-integral-expression}
  \phi(n)=\int_{\mathbb{S}}s^n d\mu(s), \quad\text{for}\quad n\in \mathbb{Z}.
\end{equation}
\end{theorem}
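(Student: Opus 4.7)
The plan is to split into the two natural directions. Sufficiency reduces to integrating the positive definite characters $n\mapsto s^n$ against a nonnegative measure. Necessity will use a Choquet integral representation, built on the extreme-boundary description of Theorem \ref{Thm-ext-boundary-on-Z}.

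For sufficiency, each character $\Gamma_s:n\mapsto s^n$ with $s\in\mathbb{S}$ is positive definite (the ``if'' half of Theorem \ref{Thm-ext-boundary-on-Z}; it reduces to the identity $\sum_{i,j}\overline{q_i}s^{n_j-n_i}q_j=|\sum_i s^{n_i}q_i|^2$, using that $s$ and $\overline{s}$ lie in a common complex slice and hence commute in $\mathbb{H}$). Given a nonnegative Radon measure $\mu$ on $\mathbb{S}$ and $\phi(n):=\int_\mathbb{S}s^n\,d\mu(s)$, splitting the quaternion-valued integrand into its four real components and using linearity of integration yields
$$\sum_{i,j}\overline{q_i}\phi(n_j-n_i)q_j=\int_\mathbb{S}\sum_{i,j}\overline{q_i}s^{n_j-n_i}q_j\,d\mu(s)\geq0.$$

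For necessity, first reduce to the normalized case: if $\phi(0)=0$ then Theorem \ref{thm-basic-properties}(ii) forces $\phi\equiv 0$ and the zero measure works; otherwise rescale by $\phi(0)$ and then scale the resulting measure by $\phi(0)$ at the end. So assume $\phi\in\mathcal{P}_*^{\mathbb{H}}(\mathbb{Z})$. Regard $\mathbb{H}^\mathbb{Z}$ as the real locally convex space $(\mathbb{R}^4)^\mathbb{Z}$ with the product topology; since $\mathbb{Z}$ is countable, this space is metrizable. By Corollary \ref{cor-compactness}, $\mathcal{P}_*^{\mathbb{H}}(\mathbb{Z})$ is a compact convex subset, and by Theorem \ref{Thm-ext-boundary-on-Z} together with the homeomorphism $s\mapsto\Gamma_s$ identified in the paragraph immediately preceding this theorem, its extreme boundary is closed and homeomorphic to $\mathbb{S}$. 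Choquet's theorem (applicable because the ambient space is metrizable) then furnishes a Borel probability measure $\nu$ supported on the extreme boundary satisfying $F(\phi)=\int F(\psi)\,d\nu(\psi)$ for every continuous real affine $F$. Testing against $F_{n,q}(\psi):=\Re(\overline{q}\,\psi(n))$ with $n\in\mathbb{Z}$ and $q$ ranging over the real basis $\{1,i_1,i_2,i_3\}$ recovers each real coordinate of $\phi(n)$ and assembles into $\phi(n)=\int\psi(n)\,d\nu(\psi)$. Pushing $\nu$ forward through $s\mapsto\Gamma_s$ produces the desired Radon probability measure $\mu$ on $\mathbb{S}$ with $\phi(n)=\int_\mathbb{S}s^n\,d\mu(s)$.

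The main obstacle is purely bookkeeping: Choquet's theorem is a real-valued statement, whereas the target identity is quaternion-valued, so the four real coordinates of the evaluation functional must be handled separately and reassembled. Nonuniqueness of $\mu$ is expected and is consistent with the statement; it foreshadows the paper's second main result, which will show that $\mathcal{P}_*^{\mathbb{H}}(\mathbb{Z})$ fails to be a Bauer simplex because $\mathbb{Z}$ has infinite exponent.
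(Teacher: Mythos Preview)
Your proof is correct and follows essentially the same route as the paper's: sufficiency from the positive definiteness of each $n\mapsto s^n$, necessity from compactness (Corollary \ref{cor-compactness}) plus a Choquet/Krein--Milman barycentric representation over the extreme boundary identified with $\mathbb{S}$ via Theorem \ref{Thm-ext-boundary-on-Z}. The only slip is directional: $\nu$ lives on the extreme boundary, so you push it forward through the inverse homeomorphism $\Gamma_s\mapsto s$ (not $s\mapsto\Gamma_s$) to land on $\mathbb{S}$.
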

\begin{proof}
The sufficiency can be easily verified based on the fact that  for any $s\in\mathbb{S}$, $s^n$ is positive definite with respect to the integer variable $n$.

As for the necessity, it suffices to prove that every  $\phi\in\mathcal{P}_*^\mathbb{H}(\mathbb{Z})$ can be expressed in the form of \eqref{Eq-thm-characterization-on-Z-integral-expression} with $\mu$ being a regular Borel probability measure. Recall that on the group of integers, every positive definite function is bounded, in other words dominated by the constant absolute value $1$. Hence, according to Corollary \ref{cor-compactness} with the absolute value $\alpha\equiv 1$, the convex set $\mathcal{P}_*^\mathbb{H}(\mathbb{Z})$ is compact. Then in light of Theorem 3.23 (the Krein-Milman Theorem) and Theorem 3.28 in \cite{Rudin-1991}, or the Choquet-Bishop-de Leeuw Theorem, we see that for any $\phi\in\mathcal{P}_*^\mathbb{H}(\mathbb{Z})$, there is a regular Borel probability measure
$\mu$ on the extreme boundary of $\mathcal{P}_*^\mathbb{H}(\mathbb{Z})$ such that the following equality
\begin{equation*}
  \phi=\int_{Ex \left(\mathcal{P}_*^\mathbb{H}(\mathbb{Z})\right)}\varphi \quad d\mu(\varphi).
\end{equation*}
holds in the sense of vector-valued integration. And noting the evident fact that taking the value at any integer point is a continuous operation with respect to the pointwise convergence topology, we thus have
\begin{equation*}
  \phi(n)=\int_{Ex \left(\mathcal{P}_*^\mathbb{H}(\mathbb{Z})\right)}\varphi(n) \quad d\mu(\varphi), \quad\text{for}\quad n\in \mathbb{Z}.
\end{equation*}
As shown in the argument right above this theorem, the sphere group $\mathbb{S}$ can be identified with the extreme boundary $Ex \left(\mathcal{P}_*^\mathbb{H}(\mathbb{Z})\right)$ via the following homeomorphism
$$s\in\mathbb{S} \longmapsto \phi\in Ex \left(\mathcal{P}_*^\mathbb{H}(\mathbb{Z})\right):n\mapsto s^n. $$
This observation allows us to rewrite the preceding equality as  \eqref{Eq-thm-characterization-on-Z-integral-expression} equivalently as desired. The proof is now completed.
\end{proof}

In the remark below, an answer to the second main question stated in the introduction, namely `whether the convex set of normalized quaternion-valued positive definite functions is a Bauer simplex in the pointwise convergence
topology' is given in the case of the group of integers; and two explicit examples are presented to support our assertion.

\begin{remark}\label{Re-Z}
  In terms of convexity the biggest difference between the complex case and the quaternionic one is how the extreme elements in the family of normalized positive definite functions are distributed. More specifically, the set of complex-valued normalized positive definite functions on any abelian group is a Bauer simplex, that is to say, every real valued continuous function on its extreme boundary has a unique affine extension to the whole set (see Theorem 4.3 in Chapter II of \cite{Alfsen-1971}); in contrast, its quaternionic counterpart is not. Intuitively speaking, in the quaternionic setting the distribution of the extreme elements are more uniform.

  This major difference also can be observed from the viewpoint of integral characterization. For the sake of simplicity and content coherence, we limit our argument in the case of the group of integers instead of a random abelian group at the present stage. Herglotz representation theorem indicates that for a complex-valued normalized positive definite function $\phi$, there exists a unique regular Borel probability measure $\mu$ on the unit circle $\mathbb{T}$ in the complex plane such that
$$\phi(n)=\int_{\mathbb{T}}z^n d\mu(z) \quad\text{for}\quad n\in \mathbb{Z}. $$
  In a word, the existence and uniqueness of the measure representation is valid. Then a question arises: whether this still holds true in the quaternionic setting. The answer turns out partially negative. The existence is valid as stated in Theorem \ref{Thm-characterization-Z}, but the uniqueness is actually not as illustrated by the two examples below.
 \begin{itemize}
  \item[Ex 1:] Consider $\phi(n)=\alpha^n$ with the constant $\alpha$ being a unit quaternion, by Theorem \ref{Thm-ext-boundary-on-Z}, we know that it is an extreme element in the family of normalized positive definite functions. Thus only one nonnegative measure $\mu=\delta_\alpha$, namely the Dirac measure centred on the point $\alpha$, satisfies \eqref{Eq-thm-characterization-on-Z-integral-expression}. In other words, $\phi$ can not be represented as the barycenter of any nonnegative measure except $\delta_\alpha$.

  \item[Ex 2:] Observe the cosine function $\phi(n)=\cos n$, one can easily see $$\phi(n)=\frac{1}{2}(\alpha^n+\overline{\alpha}^n)\quad\text{for}\quad n\in \mathbb{Z},\ \alpha\in\mathbb{S}.$$
      Thus the measure $\mu=\frac{1}{2}(\delta_\alpha+\delta_{\overline{\alpha}})$ satisfies \eqref{Eq-thm-characterization-on-Z-integral-expression} for every $\alpha=e^I$ with $I$ being a pure imaginary unit quaternion, which means the uniqueness of the measure representation breaks down in this example.
 \end{itemize}
 In conclusion, only for some of the quaternion-valued positive definite functions, the corresponding measure representations are unique. And further arguments on this phenomenon in the general case of an arbitrary abelian group are to come at the end of this section where a criterion for which of the measure representations, i.e., nonnegative Radon measure solutions to \eqref{Eq-thm-characterization-on-Z-integral-expression}, are unique shall be established.
\end{remark}

\subsection{generalized results for quaternion-valued positive definite functions on arbitrary abelian groups}In this part, we focus on the former of the two major questions, i.e., `how to characterize the extreme boundary of the convex set of normalized quaternion-valued positive definite functions' in the general case of arbitrary abelian groups.
The trick of our approach is still to show that the range of every extreme element is included by a complex slice as in the case of the group of integers. By contrast, a more complicated manipulation is required in the general case.

For simplicity of presentation, hereinafter we always assume that $G$ is an abelian group equipped with the inverse involution. In preparation for establishing our first main result in this case, we need the following two lemmas.
\begin{lemma}\label{Lemma-Ext-1}
  Any extreme element $\phi$ in $\mathcal{P}_*^\mathbb{H}(G)$ satisfies the equations below.
 %\begin{eqnarray*}% \nonumber % Remove numbering (before each equation)
  % 2\Re \phi(a)\Re \phi(b) &=& \Re \phi(a+b)+\Re \phi(a+b^*), \\
   %2\Re \phi(a)\Im \phi(b) &=& \Im \phi(a+b)+\Im \phi(a+b^*),
 %\end{eqnarray*}
 \begin{align*}% \nonumber % Remove numbering (before each equation)
   2\Re \phi(b)\Re \phi(a) &=\Re \phi(a+b)+\Re \phi(a+b^*), \\
   2\Re \phi(b)\Im \phi(a) &=\Im \phi(a+b)+\Im \phi(a+b^*),
 \end{align*}
 for all $a, b\in G$.
\end{lemma}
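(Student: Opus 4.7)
The plan is to adapt the strategy from the proof of Theorem \ref{Thm-ext-boundary-on-Z} to an arbitrary abelian group $G$. For each fixed $b \in G$, I would introduce the auxiliary pair
\[
\tau_b^\pm \phi(a) := \phi(a) \pm \tfrac{1}{2}\phi(a+b) \pm \tfrac{1}{2}\phi(a+b^*), \qquad a \in G,
\]
and first verify that both functions are quaternionic positive definite. Given $a_1,\ldots,a_k \in G$ and $q_1,\ldots,q_k \in \mathbb{H}$, the auxiliary function
\[
\varphi(c) := \sum_{1\leq i,j\leq k} \overline{q_i}\,\phi(a_i^*+c+a_j)\,q_j
\]
is positive definite by Theorem \ref{thm-positive-definiteness}. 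Using the hermitian identity $\phi(s^*)=\overline{\phi(s)}$ from Theorem \ref{thm-basic-properties}, a direct calculation would give
\[
\sum_{1\leq i,j\leq k} \overline{q_i}\,\tau_b^\pm\phi(a_i^*+a_j)\,q_j \;=\; \varphi(0) \pm \Re\varphi(b),
\]
which is nonnegative because $|\varphi(b)| \leq \varphi(0)$, using the boundedness of quaternionic positive definite functions on abelian groups that was noted at the beginning of this section.

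The next step is to exploit extremality. Since $\phi = \tfrac{1}{2}\tau_b^+\phi + \tfrac{1}{2}\tau_b^-\phi$ and $\tau_b^\pm\phi(0) = 1 \pm \Re\phi(b) \geq 0$, whenever both values $1 \pm \Re\phi(b)$ are strictly positive one can normalize each summand to lie in $\mathcal{P}_*^\mathbb{H}(G)$ and invoke extremality of $\phi$ to conclude $\tau_b^+\phi = (1+\Re\phi(b))\phi$. In the boundary cases $\Re\phi(b) = \pm 1$, one of the $\tau_b^\mp\phi$ already has value $0$ at the identity, hence vanishes identically by the Cauchy--Schwarz inequality from Theorem \ref{thm-basic-properties}, and the same conclusion is recovered. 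Either way I would obtain the single quaternionic identity
\[
\phi(a+b) + \phi(a+b^*) \;=\; 2\Re\phi(b)\,\phi(a), \qquad a,b \in G.
\]

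Since $2\Re\phi(b)$ is a real scalar that commutes with the components of $\phi(a)$, splitting this identity into real and imaginary parts immediately yields the two equalities claimed in the lemma. The main obstacle I anticipate is not analytic but combinatorial: handling the degenerate cases $\Re\phi(b) = \pm 1$ carefully, where the usual extremality argument collapses because one of the renormalization denominators vanishes. Outside those cases the argument is a clean direct generalization of what was done for $G=\mathbb{Z}$, and the fact that the constant appearing in front of $\phi(a)$ is exactly $2\Re\phi(b)$ is forced by evaluating at $a=0$ and invoking $\phi(b^*)=\overline{\phi(b)}$.
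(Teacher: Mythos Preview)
Your proposal is correct and follows essentially the same approach as the paper: define the auxiliary functions $\tau_b^\pm\phi$, show they are positive definite via the same $\varphi$-trick from Theorem~\ref{thm-positive-definiteness}, use extremality to force $\tau_b^+\phi$ to be a scalar multiple of $\phi$, and evaluate at $a=0$ to identify the scalar as $1+\Re\phi(b)$. Your explicit treatment of the degenerate cases $\Re\phi(b)=\pm 1$ is in fact more careful than the paper's version, which passes over this point silently.
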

Here operations $\Re$ and $\Im$ take the real part and the imaginary part of any given quaternion respectively. More explicitly, for $q=a_0i_0+a_1i_1+a_2i_2+a_3i_3\in\mathbb{H}$,
$$\Re\ q=a_0i_0 \quad \text{and} \quad \Im\ q=a_1i_1+a_2i_2+a_3i_3.$$
\begin{proof}
Suppose that  $\phi$ is an extreme element in the convex set  $\mathcal{P}_*^\mathbb{H}(\mathbb{G})$, and for an arbitrary element $b\in G$ we consider two related functions
$$\tau_b^\pm \phi(a):= \phi(a)\pm\frac{1}{2}\phi(a+b)\pm\frac{1}{2}\phi(a+b^*)  \quad\text{for}\quad a\in G. $$
Adopting the same procedure as in the proof of Theorem \ref{Thm-ext-boundary-on-Z}, we can easily deduce that  $\tau_b^\pm \phi$ are both positive definite on $G$. Then, since
$\phi$ is an extreme element in the convex set  $\mathcal{P}_*^\mathbb{H}(\mathbb{G})$ and equals a convex combination of $\tau_b^{\pm}\phi$, there exists a positive constant $C_b$ dependent on $b$ such that $\tau_b^{+}\phi=C_b\phi$. Accordingly, we have
\begin{equation}\label{eq-proof-Lemma-Ext-1}
  \phi(a+b)+\phi(a+b^*)=(2C_b-1) \phi(a) \quad\text{for}\quad a\in G.
\end{equation}
Moreover, by putting $a=0$, we obtain $2C_b-1=\phi(b)+\phi(b^*)$. In addition, Theorem \ref{thm-basic-properties} says $\phi$ is hermitian. Thus the equality
\begin{equation*}
  2C_b-1=2\Re\phi(b).
\end{equation*}
holds true. Inserting this back into \eqref{eq-proof-Lemma-Ext-1} and separating the real and imaginary parts of the both sides finally leads to the desired result.
\end{proof}

We would like to mention that the results of Lemma \ref{Lemma-Ext-1} is also valid in the case of an abelian semigroup with involution. And here comes the other technical lemma.

\begin{lemma}\label{Lemma-Ext-2}
 Let $\phi$ be a quaternion-valued solution to the following system
 %\begin{eqnarray}% \nonumber % Remove numbering (before each equation)
  % 2\Re \phi(a)\Re \phi(b) &=& \Re \phi(a+b)+\Re \phi(a+b^*), \\
   %2\Re \phi(a)\Im \phi(b) &=& \Im \phi(a+b)+\Im \phi(a+b^*),
 %\end{eqnarray}
  \begin{align}% \nonumber % Remove numbering (before each equation)
   \label{eq-Lemma-Ext-2-1}2\Re \phi(b)\Re \phi(a) &=\Re \phi(a+b)+\Re \phi(a+b^*), \\
   \label{eq-Lemma-Ext-2-2}2\Re \phi(b)\Im \phi(a) &=\Im \phi(a+b)+\Im \phi(a+b^*),
 \end{align}

 for all $a, b\in G$, subject to the given condition that $\phi(0)=1$ and $\displaystyle{\sup_G\lvert\phi\rvert=1}$. Then there exists an imaginary unit $I\in\mathbb{H}$ such that the range of $\phi$ is included by the complex slice $\mathbb{C}_I$.
\end{lemma}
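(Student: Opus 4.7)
The strategy is to split $\phi$ into its real part $R:=\Re\phi$ and its imaginary part $V:=\Im\phi$ (the latter valued in the pure imaginary quaternions, which I identify with $\mathbb{R}^3$), derive clean sum-and-difference formulas for $V$, and then exploit the bound $|\phi|\le 1$ (which is forced by $\sup_G|\phi|=1$) to confine $V$ to a one-dimensional subspace.

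First, plugging $a=0$ into \eqref{eq-Lemma-Ext-2-1} and \eqref{eq-Lemma-Ext-2-2} recovers the hermitian identities $R(-b)=R(b)$ and $V(-b)=-V(b)$. Swapping $a$ and $b$ in \eqref{eq-Lemma-Ext-2-2} and using the oddness of $V$ produces a companion identity for $V(a+b)-V(a-b)$; combined with \eqref{eq-Lemma-Ext-2-2} itself this yields
\[V(a+b)=R(b)V(a)+R(a)V(b),\qquad V(a-b)=R(b)V(a)-R(a)V(b),\]
while \eqref{eq-Lemma-Ext-2-1} is the d'Alembert-type relation $R(a+b)+R(a-b)=2R(a)R(b)$.

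The crux is the following dichotomy: if $V(a)$ and $V(b)$ are linearly independent over $\mathbb{R}$, then $R(a)=R(b)=0$. I would establish this by computing $V(2a)$ in two ways---once directly as $2R(a)V(a)$ and once as $V((a+b)+(a-b))$ via the sum formula---then substituting the sum and difference formulas and using the d'Alembert identity to simplify. The resulting equation factors as
\[R(a)\bigl\{2(R(b)^2-1)V(a)+(R(a-b)-R(a+b))V(b)\bigr\}=0.\]
If $R(a)\neq 0$, linear independence forces $R(b)^2=1$, hence $V(b)=0$ by $|\phi|\le 1$, contradicting independence; thus $R(a)=0$, and the symmetric argument (starting from $V(2b)$) gives $R(b)=0$.

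To finish, suppose for contradiction that $V(a_0)$ and $V(c)$ are linearly independent for some $a_0,c\in G$. The dichotomy gives $R(a_0)=R(c)=0$, from which the sum formula delivers $V(a_0+c)=0$, $\phi(2a_0)=\phi(2c)=-1$, and $V(2a_0+2c)=0$. Applying the d'Alembert identity to the pair $(2a_0,2c)$ yields $R(2a_0+2c)+R(2a_0-2c)=2$, and since each summand is bounded by $1$ in absolute value, this pins down $\phi(2a_0+2c)=1$. Comparing with the directly computable $\phi(2(a_0+c))=2R(a_0+c)^2-1$ forces $\phi(a_0+c)=\pm 1$. Finally, evaluating $V(2a_0+c)$ via the two decompositions $(2a_0)+c$ and $a_0+(a_0+c)$ produces $-V(c)=\pm V(a_0)$, a contradiction. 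Therefore $V(G)\subseteq\mathbb{R}V(a_0)$ (the case $V\equiv 0$ being trivial, as $\phi$ is then real-valued and contained in every complex slice), and setting $I:=V(a_0)/|V(a_0)|$ exhibits the desired imaginary unit. The main obstacle is the dichotomy: it is not obvious a priori which two-way computation of $V(2a)$ produces a factorization clean enough to conclude $R(a)=0$, and locating this factorization is really the only nonroutine move in the argument.
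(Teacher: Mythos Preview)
Your argument is correct, and it follows a genuinely different route from the paper's. The paper proceeds in three incremental steps: it first parametrizes the restriction of $\phi$ to each cyclic subgroup explicitly as $\phi(na)=\cos(n\theta_a)+t_a\sin(n\theta_a)I_a$ (using the d'Alembert relation and the bound to solve the recurrences), then compares these trigonometric parametrizations on the subgroups generated by $a+b$ and $a-b$, and finally on those generated by $a$ and $b$, each time deriving angle congruences that collapse to a contradiction. Your approach bypasses all of the explicit trigonometry by first extracting the bilinear identity $V(a+b)=R(b)V(a)+R(a)V(b)$ and then proving the clean dichotomy ``$V(a),V(b)$ linearly independent $\Rightarrow R(a)=R(b)=0$'' via the two-way computation of $V(2a)$; the endgame then reduces to a short chain of evaluations of $\phi$ at $2a_0$, $2c$, $a_0+c$, and $2a_0+c$. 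What your method buys is brevity and an essentially algebraic argument that never leaves the group (no appeal to the structure of solutions over $\mathbb{Z}$); what the paper's method buys is an explicit description of $\phi$ along cyclic subgroups, which is of some independent interest but not needed for the lemma itself.
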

\begin{proof}
We divide our proof into three steps. In each step, one of the following properties shall be verified in sequence.
\begin{itemize}
  \item[i)] For every $a\in G$, there is a complex slice which contains $\phi(na)$ for all integers $n$.
  \item[ii)] For any $a, b\in G$, there is one which contains $\phi(n(a+b))$ and $\phi(n(a+b^*))$ for all integers $n$.
  \item[iii)] For any $a, b\in G$, there is one which contains both $\phi(a)$ and $\phi(b)$.
\end{itemize}
The details are as follows.
\bigskip

Step 1:

Assume that $a$ is an arbitrary element in $G$. We define
 $$f(n):=\phi(na)\quad\text{for}\quad n\in\mathbb{Z}. $$
Then we substitute this expression into \eqref{eq-Lemma-Ext-2-2} to obtain
 \begin{equation}\label{eq-Lemma-Ext-2-3}
    2\Re f(m)\Im f(n) =\Im f(n+m)+\Im f(n-m)\quad\text{for}\quad n,m\in\mathbb{Z},
 \end{equation}
which implies if a complex slice contains both $f(n)$ and $f(n-m)$, it will also contains $f(n+m)$.
Accordingly, an obvious induction on $n$ shows that all the values
$$f(0), f(\pm 1),\cdots, f(\pm n)$$
are in the same complex slice. Hence, there exists an imaginary unit $I_a$ such that the complex slice $C_{I_a}$ contains $f(n)$ for all integers $n$. In other words, $f$ is essentially complex-valued.

In addition, substituting the expression of $f$ into \eqref{eq-Lemma-Ext-2-2} gives
 \begin{equation*}
    2\Re f(m)\Re f(n) =\Re f(n+m)+\Re f(n-m)\quad\text{for}\quad n,m\in\mathbb{Z}.
 \end{equation*}
One may notice that the equality above and \eqref{eq-Lemma-Ext-2-3} share the same form with the product-to-sum formulae of trigonometric functions. It is not difficult to see from this viewpoint that
there exist two constants $\theta_a\in[0,2\pi)$ and $t_a\in[0,1]$ so that
\begin{equation}\label{eq-Lemma-Ext-2-4-key}
  \phi(na)=f(n)=\cos (n\theta_a)+t_a\sin(n\theta_a)I_a\quad\text{for}\quad n\in\mathbb{Z}.
\end{equation}
The first part of this proof is now completed.

\bigskip
Step 2:

In this part, we attempt to prove by contradiction that for any given $a, b\in G$, all the values $\phi(n(a+b))$ and $\phi(n(a+b^*))$ with $n\in\mathbb{Z}$ lie in one complex slice. First, we assume the opposite of the conclusion:  for some $a, b\in G$, these related values can not be put in the same complex slice.

The next thing to do is to derive new consequences until one violates the premise.
The final conclusion of the first step states that there are imaginary units $I_a,I_b\in\mathbb{H}$, angles $\theta_a,\theta_b\in[0,2\pi)$ and real constants $t_a, t_b\in[0,1]$ such that the following expressions
 \begin{align*}% \nonumber % Remove numbering (before each equation)
   \phi(na)&=\cos (n\theta_a)+t_a\sin(n\theta_a)I_a,\\
   \phi(nb)&=\cos (n\theta_b)+t_b\sin(n\theta_b)I_b,
 \end{align*}
hold for all integers $n$. Similarly, we have
 \begin{align*}% \nonumber % Remove numbering (before each equation)
   \phi(n(a+b))&=\cos (n\theta_1)+t_1\sin(n\theta_1)I_1, \\
   \phi(n(a+b^*))&=\cos (n\theta_2)+t_2\sin(n\theta_2)I_2.
 \end{align*}
Moreover, the premise above indicates that the imaginary units $I_1,I_2$ are real linearly independent, the angles $\theta_1,\theta_2$ belong to $(0,\pi)\cup(\pi,2\pi)$, and the constants $t_1, t_2$ lie in $(0,1]$. Multiplying $a,b$ by $n$ in \eqref{eq-Lemma-Ext-2-2} and inserting the four preceding expressions leads to the equality below.
\begin{equation*}
2\cos(n\theta_b)\sin(n\theta_a)I_a=t_1\sin(n\theta_1)I_1+t_2\sin(n\theta_2)I_2\quad\text{for}\quad n\in\mathbb{Z}.
\end{equation*}
When $n=1$, the right side is clearly not zero, which forces the imaginary unit $I_a$ on the left side to become a real linear combination of the ones on the right side. Hence, there exist two real constants $c_i$ such that
\begin{equation}\label{eq-Lemma-Ext-2-5}
  \sin(n\theta_i)=c_i\cos(n\theta_b)\sin(n\theta_a), \quad i=1,2;
\end{equation}
are valid for all integers $n$.
By performing the same procedure with interchange of $a$ and $b$, we have
\begin{equation}\label{eq-Lemma-Ext-2-6}
  \sin(n\theta_i)=c'_i\cos(n\theta_a)\sin(n\theta_b), \quad i=1,2;
\end{equation}
where $c'_i$ is another pair of real constants. Thus,
\begin{equation*}
  \sin(n\theta_i)=\frac{1}{2}(c_i+c'_i)\sin(n(\theta_a+\theta_b))=\frac{1}{2}(c_i-c'_i)\sin(n(\theta_a-\theta_b)), \quad i=1,2;
\end{equation*}
holds for all $n$, implying  $\theta_a\pm\theta_b\equiv\theta_i$ or $-\theta_i$ mod $2\pi$ in the conventional additive group structure of $\mathbb{R}$. It follows that $\theta_a$ or $\theta_b\equiv0$ mod $\pi$, so in view of \eqref{eq-Lemma-Ext-2-5} and \eqref{eq-Lemma-Ext-2-6},
\begin{equation*}
\sin(n\theta_1)=\sin(n\theta_2)=0\quad\text{for}\quad n\in\mathbb{Z},
\end{equation*}
which is in contradiction to the condition $\theta_1,\theta_2\in(0,\pi)\cup(\pi,2\pi)$.

Therefore, the premise is false. Then indeed, for any given $a, b\in G$, there is an imaginary unit $I$ such that the complex slice $C_I$ contains all $\phi(n(a+b))$ and $\phi(n(a+b^*))$. Now the second part of this proof is accomplished.

\bigskip
Step 3:

It only remains to show that for any $a, b\in G$, there is a complex slice which contains both $\phi(a)$ and $\phi(b)$. The approach is similar with that applied in the second step, and some minor details are omitted to avoid repetition.

Let $a, b\in G$, and suppose that no complex slice can contain both $\phi(a)$ and $\phi(b)$. In order to create a contradiction, now we need to gather some preliminary results. Firstly, owing to the final conclusion of the first step, the values of $\phi$ on the two subgroups generated by $a$ and $b$ respectively have the forms as follows.
 \begin{align*}% \nonumber % Remove numbering (before each equation)
   \phi(na)&=\cos (n\theta_a)+t_a\sin(n\theta_b)I_a, \\
   \phi(nb)&=\cos (n\theta_b)+t_b\sin(n\theta_b)I_b.
 \end{align*}
with $I_a,I_b$ being a pair of real linearly independent imaginary units, $\theta_a,\theta_b\in(0,\pi)\cup(\pi,2\pi)$ and $t_a, t_b\in (0,1]$. Secondly, according to the final conclusion in the second step, all $\phi(n(2b+a^*))$ and $\phi(na)$ with $n\in\mathbb{Z}$ are in the same complex slice, since $2b+a^*=b+(a+b^*)^*$ and $a=b+(a+b^*)$. This means $\phi(n(2b+a^*))$ must belong to $C_{I_a}$ because this complex slice is the only one that contains all $\phi(na)$. Then applying the final result of the first step again gives
\begin{equation*}
\phi(n(2b+a^*))=\cos (n\theta_3)+t_3\sin(n\theta_3)I_a,
\end{equation*}
where $\theta_3\in[0,2\pi)$ and $t_3\in [0,1]$; while for the values of $\phi$ on the subgroup generated by $a+b^*$, we have
\begin{equation*}
\phi(n(a+b^*))=\cos (n\theta_4)+t_4\sin(n\theta_4)I_4,
\end{equation*}
where $\theta_4\in[0,2\pi)$ and $t_4\in [0,1]$ and $I_4$ is another imaginary unit, the relation between which and $I_a,I_b$ is unknown yet.

Next, we precede to create a contradiction. Replacing $a$ and $b$ with $b$ and $a+b^*$ respectively in \eqref{eq-Lemma-Ext-2-1} and \eqref{eq-Lemma-Ext-2-2} and then substituting the four expressions about $\phi$ above yields
 \begin{align}% \nonumber % Remove numbering (before each equation)
  \label{eq-Lemma-Ext-2-7}\cos (n\theta_4)\cos (n\theta_b)&=\cos (n\theta_a)+\cos (n\theta_3), \\
  \label{eq-Lemma-Ext-2-8}\cos (n\theta_4)\sin (n\theta_b)I_b&=\left(t_a\sin (n\theta_a)+t_3\sin (n\theta_3)\right)I_a.
 \end{align}
It is immediately seen from \eqref{eq-Lemma-Ext-2-8} that the equalities below
 \begin{align}% \nonumber % Remove numbering (before each equation)
  \label{eq-Lemma-Ext-2-9}\cos (n\theta_4)\sin (n\theta_b)&=0, \\
  \label{eq-Lemma-Ext-2-10}t_a\sin (n\theta_a)+t_3\sin (n\theta_3)&=0.
 \end{align}
hold for all integers $n$, since $I_a$ and $I_b$ are real linearly independent. The condition $\theta_b\in(0,\pi)\cup(\pi,2\pi)$, in light of \eqref{eq-Lemma-Ext-2-9}, leads to the result that  $\theta_b$ and $\theta_4\equiv \frac{\pi}{2}$ mod $\pi$. Hence, the left side of \eqref{eq-Lemma-Ext-2-7} equals $0$ when $n$ is odd, and $2$ when $n$ is even.
What's more, applying the condition that $\theta_a\in(0,\pi)\cup(\pi,2\pi)$ and $t_a\neq 0$ to \eqref{eq-Lemma-Ext-2-10}, we get
$t_a=t_3$ and $\theta_a=2\pi-\theta_3$, so that the right side of \eqref{eq-Lemma-Ext-2-7} equals
$2\cos(n\theta_a)$. Therefore,
\begin{equation*}
  \cos(n\theta_a)=\left\{\begin{array}{lcl}
                           0, &  & \text{for } n \text{ odd}; \\
                           &  &\\
                           1, &  & \text{for } n \text{ even}.
                         \end{array}\right.
\end{equation*}
It is clear that no such angle $\theta_a$ exists. Finally, the opposite of the desired final conclusion is reduced to this absurdity, which completes the whole proof.
\end{proof}

We can now give an answer to the first main question. Recall that the notation $\mathbb{S}$ stands for the $3$-dimensional sphere consisting of unit quaternions, and $\mathcal{P}_*^\mathbb{H}(G)$ the family of normalized quaternion-valued positive definite functions on $G$.

\begin{theorem}\label{Thm-ext-boundary-on-G}
A function $\phi:G\to\mathbb{H}$ is an extreme element in $\mathcal{P}_*^\mathbb{H}(G)$ if and only if it is a homomorphism from $G$ to the sphere group $\mathbb{S}$.
\end{theorem}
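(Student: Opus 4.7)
The plan is to derive both implications of the theorem by leveraging Lemmas \ref{Lemma-Ext-1} and \ref{Lemma-Ext-2} and then reducing the problem to the classical complex theory via Theorems \ref{thm-equiv-c-h-pd} and \ref{Thm-qpd-to-cpd}.

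For the sufficiency, let $\phi:G\to\mathbb{S}$ be a group homomorphism. Since $G$ is abelian, the image $\phi(G)$ is a commutative subgroup of the unit quaternions, hence is contained in some complex slice $\mathbb{C}_I$. Viewed as a $\mathbb{T}$-valued classical character (with $\mathbb{T}=\mathbb{S}\cap\mathbb{C}_I$), $\phi$ is normalized and complex positive definite by the standard identity $\sum_{i,j}\overline{c_i}c_j\phi(s_j-s_i)=\bigl|\sum_j c_j\phi(s_j)\bigr|^2\geq 0$. Theorem \ref{thm-equiv-c-h-pd} then upgrades this to quaternionic positive definiteness, so $\phi\in\mathcal{P}_*^\mathbb{H}(G)$. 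To check extremality, suppose $\phi=\lambda f+(1-\lambda)g$ with $f,g\in\mathcal{P}_*^\mathbb{H}(G)$ and $\lambda\in(0,1)$. For each $x\in G$ one has $|\phi(x)|=1$, while the bound $|f(x)|,|g(x)|\leq 1$ already observed at the beginning of this section holds. Since the Euclidean unit ball of $\mathbb{H}\cong\mathbb{R}^4$ is strictly convex, this forces $f(x)=g(x)=\phi(x)$, so $f=g=\phi$ and $\phi$ is indeed extreme.

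For the necessity, let $\phi$ be extreme in $\mathcal{P}_*^\mathbb{H}(G)$. Lemma \ref{Lemma-Ext-1} supplies precisely the two functional equations that form the hypothesis of Lemma \ref{Lemma-Ext-2}; and the normalization $\phi(0)=1$ together with the already established bound $|\phi(g)|\leq\phi(0)=1$ yields $\sup_G|\phi|=1$. Lemma \ref{Lemma-Ext-2} therefore produces an imaginary unit $I$ with $\phi(G)\subset\mathbb{C}_I$. Consequently $\phi$ is $\mathbb{C}_I$-valued, and Theorem \ref{Thm-qpd-to-cpd} (applied with $P_I\phi=\phi$) ensures that $\phi$ is also complex positive definite on $G$. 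Moreover $\phi$ remains extreme in the convex set of normalized $\mathbb{C}_I$-valued complex positive definite functions: any convex decomposition in the complex setting lifts via Theorem \ref{thm-equiv-c-h-pd} to a decomposition in $\mathcal{P}_*^\mathbb{H}(G)$, where extremality forces both constituents to coincide with $\phi$. The classical fact that an extreme point of the set of normalized complex positive definite functions on an abelian group is a character then concludes that $\phi$ is a homomorphism from $G$ into $\mathbb{T}\subset\mathbb{S}$, as desired.

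The genuine difficulty of the theorem is concentrated in Lemma \ref{Lemma-Ext-2}, which performs the delicate slice-confinement via a trigonometric-identity argument; once that is in hand, the theorem itself is a clean bookkeeping exercise that transfers extremality between the quaternionic and complex settings and then invokes the classical characterization of extreme normalized complex positive definite functions.
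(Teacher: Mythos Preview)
Your proof is correct. The sufficiency matches the paper's approach (which in turn defers to the proof of Theorem~\ref{Thm-ext-boundary-on-Z}); your observation that an abelian subgroup of $\mathbb{S}$ lies in a single slice is a nice way to package the positive-definiteness check.

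For the necessity, your route genuinely diverges from the paper's. Both arguments start identically, using Lemmas~\ref{Lemma-Ext-1} and~\ref{Lemma-Ext-2} to confine the range of $\phi$ to a slice $\mathbb{C}_I$. From there the paper stays self-contained: it introduces a second pair of auxiliary functions $\kappa_b^\pm\phi(a)=\phi(a)\pm\tfrac{I}{2}\phi(a+b)\mp\tfrac{I}{2}\phi(a+b^*)$, verifies their positive definiteness, and exploits extremality of $\phi$ to obtain $\phi(a+b)-\phi(a+b^*)=2\Im\phi(b)\,\phi(a)$, which combined with Lemma~\ref{Lemma-Ext-1} yields the multiplicativity $\phi(a+b)=\phi(b)\phi(a)$ directly. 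You instead observe that any complex convex decomposition of $\phi$ lifts via Theorem~\ref{thm-equiv-c-h-pd} to a quaternionic one, so $\phi$ is extreme already in the complex normalized positive-definite cone, and then invoke the classical fact that such extreme points are characters. Your reduction is shorter and avoids the $\kappa_b^\pm$ construction entirely, at the cost of importing the classical extreme-point characterization as a black box; the paper's argument is longer but makes the whole derivation internal to the quaternionic framework.
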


\begin{proof}
First, we verify the necessity. Provided that $\phi$ is an extreme element in the convex set  $\mathcal{P}_*^\mathbb{H}(\mathbb{Z})$, combining Lemmas \ref{Lemma-Ext-1} and \ref{Lemma-Ext-2} yields that there exists a imaginary unit $I\in\mathbb{H}$ such that the corresponding complex slice $C_I$ includes the range of $\phi$.
Then for an arbitrary $b\in G$, we define a pair of auxiliary functions $\kappa_b^{\pm}\phi$ as
$$\kappa_b^+ \phi(a):= \phi(a)+\frac{I}{2}\phi(a+b)-\frac{I}{2}\phi(a+b^*); $$
$$\kappa_b^- \phi(a):= \phi(a)-\frac{I}{2}\phi(a+b)+\frac{I}{2}\phi(a+b^*). $$
An argument similar to the one used in the proof of Theorem \ref{Thm-ext-boundary-on-Z} shows that
$\kappa_b^{\pm}\phi$, treated as $C_I$-valued functions, both are complex positive definite, thereby quaternionic positive definite due to Theorem \ref{thm-equiv-c-h-pd}.

Obviously, the extreme element $\phi$ is a convex combination of $\kappa_b^{\pm}\phi$, and then the quaternionic positive definiteness of both the functions $\kappa_b^{\pm}\phi$ forces them to be nonnegative scalar multiples of $\phi$. We thus obtain
\begin{equation*}
  \phi(a+b)-\phi(a+b^*)=IC_b\phi(a)\quad\text{for}\quad a\in G,
\end{equation*}
where $C_b$ is a real constant dependent on $b$. Then putting $a=0$, we get $IC_b=\phi(b)-\phi(b^*)=2\Im \phi(b)$ by Theorem \ref{thm-basic-properties}.
Hence,
\begin{equation*}
  \phi(a+b)-\phi(a+b^*)=2\Im \phi(b)\phi(a)\quad\text{for}\quad a,b\in G.
\end{equation*}
Moreover, Lemma \ref{Lemma-Ext-1} says
\begin{equation*}
  \phi(a+b)+\phi(a+b^*)=2\Re \phi(b)\phi(a)\quad\text{for}\quad a,b\in G.
\end{equation*}
Therefore,
\begin{equation*}
  \phi(a+b)=\phi(b)\phi(a)\quad\text{for}\quad a,b\in G;
\end{equation*}
which, in light of Theorem \ref{thm-basic-properties}, implies
$\lvert \phi(a)\rvert^2=\phi(a^*)\phi(a)=\phi(0)=1$. In conclusion, every extreme element $\phi$ is a homomorphism from $G$ to the sphere group $\mathbb{S}$.

As for the sufficiency, its proof is just a replication of that given earlier for the sufficiency in Theorem \ref{Thm-ext-boundary-on-Z}, and so is omitted.
\end{proof}

As shown in the above theorem, homomorphisms from $G$ to the sphere group $\mathbb{S}$ play a significant role in the convex structure of quaternion-valued positive definite functions. And they shall be named of a new term.

\begin{definition}
  A quaternionic character on $G$ is a homomorphism from $G$ to the sphere group $\mathbb{S}$. The set of all the quaternionic characters is called the quaternionic dual of $G$ and denoted by $G^\delta$, namely $G^\delta=Hom(G,\mathbb{S})$.
\end{definition}
We refer to the pointwise convergence topology as the canonical topology of $G^\delta$.
With the new notation, Theorem \ref{Thm-ext-boundary-on-G} can be simplified as $$Ex\left(\mathcal{P}_*^\mathbb{H}(G)\right)=G^\delta. $$
Note that in contrast to its complex counterpart, i.e., $Hom(G,\mathbb{T})$ usually called the Pontryagin dual or the dual group of $G$,
the quaternionic dual $G^\delta$ does not possess a natural group structure because of the non-commutativity of quaternions. Nevertheless, there still are many operations on $G^\delta$ with algebraic effects, and some of them will be introduced in the next subsection.

As an application of our first main result, we now present an integral characterization for quaternion-valued positive definite functions.
\begin{theorem}\label{Thm-characterization-G}
  A function $\phi:G\mapsto\mathbb{H}$ is positive definite if and only if there exists a (not necessarily unique) nonnegative Radon measure $\mu$  on $G^\delta$ such that
\begin{equation}\label{Eq-thm-characterization-on-G-integral-expression}
  \phi(g)=\int_{G^\delta}\gamma(g) d\mu(\gamma) \quad\text{for}\quad g\in G.
\end{equation}
\end{theorem}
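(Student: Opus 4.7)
The plan is to replicate the strategy used for Theorem \ref{Thm-characterization-Z}, with $\mathbb{S}$ replaced by $G^\delta$ throughout. The sufficiency is a routine check: for each $\gamma\in G^\delta$, the homomorphism property together with $|\gamma(g)|=1$ gives that $\gamma(g_1^*\circ g_2)=\overline{\gamma(g_1)}\gamma(g_2)$, so $\gamma$ is itself positive definite, and integrating against a nonnegative Radon measure preserves quaternionic positive definiteness by a direct application of the definition (exchanging the finite sum with the integral).

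For the necessity, I first reduce to the normalized case. If $\phi(0)=0$, then Theorem \ref{thm-basic-properties}(ii) forces $\phi\equiv 0$ and the zero measure works; otherwise, normalize by $\phi(0)>0$ and multiply the resulting probability measure by the same constant. So it suffices to represent an arbitrary $\phi\in\mathcal{P}_*^\mathbb{H}(G)$. The earlier remark that every quaternionic positive definite function on $G$ is dominated by the constant absolute value $\alpha\equiv 1$, together with Corollary \ref{cor-compactness}, tells us that $\mathcal{P}_*^\mathbb{H}(G)$ is a compact convex subset of $\mathbb{H}^G$ in the pointwise convergence topology. By the Choquet--Bishop--de Leeuw theorem (as cited in the proof of Theorem \ref{Thm-characterization-Z}) applied coordinate by coordinate, there exists a regular Borel probability measure $\nu$ supported on the extreme boundary $Ex(\mathcal{P}_*^\mathbb{H}(G))$ such that
\begin{equation*}
\phi(g)=\int_{Ex(\mathcal{P}_*^\mathbb{H}(G))}\varphi(g)\,d\nu(\varphi)\quad\text{for}\quad g\in G,
\end{equation*}
using that each evaluation functional $\varphi\mapsto\varphi(g)$ is continuous and real-linear on each component.

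The key step is then to identify this extreme boundary with $G^\delta$. Theorem \ref{Thm-ext-boundary-on-G} already supplies the set-theoretic bijection $\iota:G^\delta\to Ex(\mathcal{P}_*^\mathbb{H}(G))$, $\gamma\mapsto\gamma$. By definition, the canonical topology on $G^\delta$ coincides with the subspace topology inherited from $\mathbb{H}^G$ with pointwise convergence, so $\iota$ is trivially continuous. Moreover $G^\delta$ is a closed subset of the compact space $\mathbb{S}^G$ (closure under pointwise limits of the homomorphism identity), hence compact; since the target is Hausdorff, $\iota$ is a homeomorphism and $\nu$ transports to a regular Borel probability measure $\mu=\iota^{-1}_*\nu$ on $G^\delta$. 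Substituting into the barycentric formula above yields \eqref{Eq-thm-characterization-on-G-integral-expression}.

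The main obstacle I anticipate is purely technical: $\mathcal{P}_*^\mathbb{H}(G)$ takes values in $\mathbb{H}$ rather than $\mathbb{R}$, so care is needed when invoking the Choquet--Bishop--de Leeuw theorem, which is stated for compact convex sets in locally convex \emph{real} topological vector spaces. The remedy is to identify $\mathbb{H}^G$ with $(\mathbb{R}^4)^G$ as a real locally convex space, under which $\mathcal{P}_*^\mathbb{H}(G)$ remains a compact convex subset and the evaluation maps $\varphi\mapsto\Re\varphi(g), \Im\varphi(g)$ are continuous real-affine; applying Choquet--Bishop--de Leeuw to this real setting produces the required measure, and then recombining the components recovers the quaternion-valued integral formula. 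Uniqueness is not claimed (and, as the final subsection will show, actually fails outside the $\mathbb{Z}_2$-vector space case), so no further effort is needed to pin $\mu$ down.
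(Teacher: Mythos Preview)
Your proposal is correct and follows essentially the same approach as the paper: the paper's own proof simply states that the argument is ``almost the same'' as that of Theorem~\ref{Thm-characterization-Z} and omits the details, and your write-up is precisely that replication (compactness via Corollary~\ref{cor-compactness}, Choquet--Bishop--de~Leeuw, and identification of the extreme boundary with $G^\delta$ through Theorem~\ref{Thm-ext-boundary-on-G}). Your extra care about viewing $\mathbb{H}^G$ as a real locally convex space is a welcome elaboration rather than a departure.
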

\begin{proof}
The proof of this result is almost the same with that given for Theorem \ref{Thm-characterization-Z}, and so is omitted.
\end{proof}

\subsection{final arguments on the uniqueness of measure representations on the quaternionic dual}
We now turn to the second main question, which is `whether $\mathcal{P}_*^\mathbb{H}(G)$ is a Bauer simplex in the pointwise convergence topology'. As is well known, there are multiple ways to describe a Bauer simplex (see, e.g., Theorems 4.1 and 4.3 in Chapter II of \cite{Alfsen-1971}). Among them we choose the one directly related to the measure representation given in Theorem \ref{Thm-characterization-G}. Indeed, if the nonnegative solution $\mu$ of  \eqref{Eq-thm-characterization-on-G-integral-expression} is unique for every $\phi\in \mathcal{P}_*^\mathbb{H}(G)$, then it is a Bauer simplex, and vice versa.

In the special case of $G$ being the group of integers, as Remark \ref{Re-Z} says, the nonnegative solution is unique for some $\phi$'s, whereas the uniqueness breaks down for others; which means $\mathcal{P}_*^\mathbb{H}(G)$ is not a Bauer simplex when $G=\mathbb{Z}$.

A natural question arises at this point: Does this phenomenon exist in any case? We shall see later that the answer varies with a certain algebraic feature of $G$. Two approaches to the final conclusion are provided as follows.
\begin{itemize}
  \item[1)] Lemma \ref{lemma-criteria-G} $\longrightarrow$ Theorem \ref{thm-property-p-d-f-G} (with Lemma \ref{lemma-equiv-algebra-feature}) $\longrightarrow$ Theorem \ref{thm-not-Bauer-G}

  \item[2)] A construction method (with Lemma \ref{lemma-equiv-algebra-feature}) $\longrightarrow$ Theorem \ref{thm-not-Bauer-G}
\end{itemize}
The former involves some manipulations on measures and is relatively abstract, while the latter is much more straightforward.

Throughout this subsection, the space of (signed) Radon measures on the quaternionic dual $G^\delta$ is denoted by $\mathcal{M}(G^\delta)$, and the subspace $\mathcal{K}(G^\delta)$ consists of $\nu\in
\mathcal{M}(G^\delta)$ satisfying
$$
\int_{G^\delta}\gamma\ d\nu(\gamma)=0
$$

 In the lemma below, we give a criteria for whether the nonnegative solution to \eqref{Eq-thm-characterization-on-G-integral-expression} is unique.
\begin{lemma}\label{lemma-criteria-G}
Let $\mu\in\mathcal{M}(G^\delta)$ be a nonnegative solution of \eqref{Eq-thm-characterization-on-G-integral-expression} for a given $\phi\in\mathcal{P}_*^\mathbb{H}(G)$. Then it is the unique one iff every nonzero $\nu\in\mathcal{K}(G^\delta)$ satisfies one of the following conditions.
\begin{itemize}
  \item[i)]$\nu_2^-\neq 0$,
  \item[ii)]$\nu_2^-=0$ and $d\nu_1^-/d\mu$ is essentially unbounded,
\end{itemize}
where $\nu^-=\nu_1^-+\nu_2^-$ is the Lebesgue decomposition of the lower variation of $\nu$
into the absolutely continuous part $\nu_1^-$ and the singular part $\nu_2^-$ w.r.t. $\mu$.
\end{lemma}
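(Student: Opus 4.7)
The plan is to reduce the lemma to a single well-known equivalence from measure theory: for any signed Radon measure $\nu$ on $G^\delta$, the sum $\mu + \nu$ is again a nonnegative measure iff $\nu^- \leq \mu$ as measures. Once this is recorded, the lemma becomes the assertion that $\mu$ fails to be the unique nonnegative solution of \eqref{Eq-thm-characterization-on-G-integral-expression} precisely when $\mathcal{K}(G^\delta)$ contains a nonzero $\nu$ satisfying $\nu^- \leq \mu$ (possibly after rescaling), and the remaining task is only to translate the condition $\nu^- \leq \mu$ into the language of the Lebesgue decomposition of $\nu^-$ w.r.t.\ $\mu$.

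For the first step I would verify $\mu+\nu\geq 0 \Leftrightarrow \nu^-\leq\mu$ using the Hahn decomposition of $\nu$: on the negative set $N$, nonnegativity of $\mu+\nu$ forces $\nu^-(A) \leq \mu(A\cap N)$ for every measurable $A$, and the reverse direction is the direct estimate $(\mu+\nu)(A) \geq \mu(A) - \nu^-(A) \geq 0$. The other piece of the reduction is that two nonnegative solutions $\mu, \mu'$ of \eqref{Eq-thm-characterization-on-G-integral-expression} differ by $\nu := \mu' - \mu \in \mathcal{K}(G^\delta)$, obtained by integrating $\gamma$ against $\nu$; and since $\gamma(0) = 1$ for every $\gamma \in G^\delta$, membership in $\mathcal{K}(G^\delta)$ automatically forces $\nu(G^\delta) = 0$, so any perturbation I construct preserves total mass and remains a probability measure.

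The $(\Leftarrow)$ direction of the lemma is then immediate by contrapositive: a pair of distinct nonnegative solutions yields a nonzero $\nu \in \mathcal{K}(G^\delta)$ with $\nu^- \leq \mu$, hence $\nu^-$ is absolutely continuous w.r.t.\ $\mu$ (so $\nu_2^- = 0$) and $d\nu_1^-/d\mu \leq 1$ $\mu$-almost everywhere, violating both (i) and (ii). For the $(\Rightarrow)$ direction, starting from a nonzero $\nu \in \mathcal{K}(G^\delta)$ with $\nu_2^-=0$ and $\|d\nu_1^-/d\mu\|_\infty \leq M < \infty$, I would rescale to $\tilde\nu := \nu/M$, which still lies in the linear subspace $\mathcal{K}(G^\delta)$ and now satisfies $\tilde\nu^- \leq \mu$; then $\mu + \tilde\nu$ is a second nonnegative solution of \eqref{Eq-thm-characterization-on-G-integral-expression} distinct from $\mu$.

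I expect the only delicate point to be this rescaling: the lemma permits \emph{any} finite essential bound on the density, not only the bound $1$, so the counterexample to uniqueness cannot be $\mu + \nu$ itself but must be $\mu + \nu/M$. This is where linearity of $\mathcal{K}(G^\delta)$ is essential, and it is what distinguishes condition (ii) from the superficially stronger condition ``$d\nu_1^-/d\mu \leq 1$ essentially.'' Beyond this, the proof is routine bookkeeping on the Jordan and Lebesgue decompositions.
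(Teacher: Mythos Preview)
Your proposal is correct and follows essentially the same route as the paper: both directions are proved by contrapositive, producing in one case the difference $\nu=\mu'-\mu$ with $\nu^-\leq\mu$, and in the other the perturbed solution $\mu+\nu/C$ after rescaling by the essential bound $C$ on $d\nu^-/d\mu$. Your write-up is in fact more explicit than the paper's, spelling out the Hahn-decomposition equivalence $\mu+\nu\geq 0\Leftrightarrow\nu^-\leq\mu$ and the total-mass observation $\nu(G^\delta)=0$ that the paper leaves implicit.
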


\begin{proof}
Assume that \eqref{Eq-thm-characterization-on-G-integral-expression} has multiple nonnegative solutions, then there exists another solution $\mu'$ except for $\mu$. One can easily see that nonzero Radon measure $\nu:=\mu'-\mu$ does not satisfy any of Conditions i and ii. Indeed, the lower variation of $\nu$ is absolutely continuous with respect to $\mu$, and furthermore the corresponding Radon-Nikodym derivative is essentially bounded. So the sufficiency is true.

As for the necessity, provided that a nonzero Radon measure $\nu$ fails to satisfies either of Conditions i and ii, which means the Radon-Nikodym derivative $d\nu^-/d \mu$  exists and is essentially less than a positive constant $C$, then we find another different solution $\mu'$ defined as $\mu+\frac{1}{C}\nu$. Now this proof is completed.
\end{proof}

For convenience, we identify the circle group $\mathbb{T}$ with the unit circle in a fixed complex slice $C_{\widehat I}$ of the quaternion algebra. Then the classical dual group $\widehat{G}:=Hom(G,\mathbb{T})$ can be considered as a subset of the quaternion dual $G^\delta$. And we denote the subset of all the real-valued elements (e.g., the constant character $1$)  in $G^\delta$ by $G^\delta_\mathbb{R}$. In particular, $G^\delta_\mathbb{R}\subset \widehat{G} \subset G^\delta$.

The following lemma shows that the above inclusion relation is tight if and only if $G$ has an exponent $\leq 2$. Although its proof is evident, this result is crucial in both the approaches to Theorem \ref{thm-not-Bauer-G}.
\begin{lemma}\label{lemma-equiv-algebra-feature}
The following conditions are equivalent.
\begin{itemize}
  \item[i)]$\widehat{G}=G^\delta$,
  \item[ii)]every $\phi\in\mathcal{P}_*^\mathbb{H}(G)$ is real valued (i.e.,  $G^\delta_\mathbb{R}=G^\delta$),
  \item[iii)] $G$ is of exponent $\leq 2$.
\end{itemize}
\end{lemma}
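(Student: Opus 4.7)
The plan is to prove the chain $(\mathrm{iii})\Rightarrow(\mathrm{ii})\Rightarrow(\mathrm{i})\Rightarrow(\mathrm{iii})$, which is the most economical route and already makes clear why the algebraic condition ``exponent $\leq 2$'' is exactly the obstruction.

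For $(\mathrm{iii})\Rightarrow(\mathrm{ii})$, if $G$ is of exponent $\leq 2$ then $g^{*}=-g=g$ for every $g\in G$. Applied to any $\phi\in\mathcal{P}_{*}^{\mathbb{H}}(G)$, the hermitian identity from Theorem \ref{thm-basic-properties}(i) yields $\phi(g)=\phi(g^{*})=\overline{\phi(g)}$, so $\phi$ is real-valued; in particular every $\Gamma\in G^{\delta}\subset\mathcal{P}_{*}^{\mathbb{H}}(G)$ is real-valued, which is $G^{\delta}_{\mathbb{R}}=G^{\delta}$. For $(\mathrm{ii})\Rightarrow(\mathrm{i})$, the hypothesis forces every $\Gamma\in G^{\delta}$ to take values in $\mathbb{R}\cap\mathbb{S}=\{\pm 1\}\subset\mathbb{T}$, hence $G^{\delta}\subseteq \mathrm{Hom}(G,\mathbb{T})=\widehat{G}$; combined with the automatic inclusion $\widehat{G}\subseteq G^{\delta}$, this gives $\widehat{G}=G^{\delta}$.

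The substantive implication is $(\mathrm{i})\Rightarrow(\mathrm{iii})$, which I would prove by contrapositive: assuming $G$ contains an element $g$ with $2g\neq 0$, I will construct a quaternionic character $\Gamma\in G^{\delta}\setminus\widehat{G}$. Pick an imaginary unit $I$ with $I\perp\widehat{I}$; the circle $\mathbb{C}_{I}\cap\mathbb{S}$ is an abelian subgroup of $\mathbb{S}$ isomorphic as a group to $\mathbb{T}$, and it meets $\mathbb{T}=\mathbb{C}_{\widehat I}\cap\mathbb S$ only in $\{\pm 1\}$. On the cyclic subgroup $\langle g\rangle$ define $\Gamma_{0}(ng):=e^{I n\theta}$, choosing $\theta=\pi/2$ if $g$ has infinite order and $\theta=2\pi/\mathrm{ord}(g)$ if $g$ is of finite order; the assumption $2g\neq 0$ is exactly what guarantees that $\Gamma_{0}$ is a well-defined group homomorphism with $\Gamma_{0}(g)\notin\{\pm 1\}$. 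Since $\mathbb{C}_{I}\cap\mathbb{S}\cong\mathbb{T}\cong\mathbb{R}/\mathbb{Z}$ is divisible, hence injective in the category of abelian groups, $\Gamma_{0}$ extends to a homomorphism $\Gamma:G\to\mathbb{C}_{I}\cap\mathbb{S}\subset\mathbb{S}$, i.e.\ an element of $G^{\delta}$. Since $\Gamma(g)=\Gamma_{0}(g)\notin\{\pm 1\}=\mathbb{T}\cap(\mathbb{C}_{I}\cap\mathbb{S})$, we have $\Gamma\notin\widehat{G}$, contradicting (i).

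The only nontrivial ingredient is the extension step in the last implication, which rests on the standard fact that divisible abelian groups are injective objects (Baer's criterion); everything else reduces to the hermitian identity and the observation that any abelian subgroup of $\mathbb{S}$ sits inside a circle $\mathbb{C}_{I}\cap\mathbb{S}$. I do not anticipate any real obstacle beyond being careful that the two cases (infinite order and finite order $>2$) are both handled by the same formula.
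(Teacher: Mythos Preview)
Your argument is correct. The implications $(\mathrm{iii})\Rightarrow(\mathrm{ii})$ and $(\mathrm{ii})\Rightarrow(\mathrm{i})$ coincide with the paper's (the paper simply calls $(\mathrm{ii})\Rightarrow(\mathrm{i})$ ``trivial''). The real difference is in $(\mathrm{i})\Rightarrow(\mathrm{iii})$. The paper also argues by contrapositive but, instead of producing a character, it writes down an explicit normalized positive definite function $\phi$ supported on $\{0,\pm a\}$ with $\phi(\pm a)=\pm J/2$ for some imaginary unit $J\neq\pm\widehat I$, checks by hand that it is positive definite, and then invokes the integral representation (Theorem~\ref{Thm-characterization-G}) to conclude that if $\widehat G=G^\delta$ held, every $\phi\in\mathcal{P}_*^{\mathbb H}(G)$ would have to be $\mathbb{C}_{\widehat I}$-valued, a contradiction.

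Your route is more self-contained: you exhibit directly an element of $G^\delta\setminus\widehat G$ by defining a character on $\langle g\rangle$ into the ``wrong'' circle $\mathbb{C}_I\cap\mathbb{S}$ and extending via the injectivity (divisibility) of the circle group. This avoids both the positive-definiteness verification and the appeal to the integral representation theorem, at the modest cost of citing Baer's criterion. The paper's route, on the other hand, needs no extension lemma but leans on the heavier Theorem~\ref{Thm-characterization-G}. One small remark: your closing comment about ``any abelian subgroup of $\mathbb{S}$ sits inside a circle'' is true but not actually used in your argument, since you build $\Gamma$ to land in a fixed circle from the outset.
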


\begin{proof}
(i $\Rightarrow$ iii) Suppose that $G$ has an exponent $>2$, then there exists an element $a\in G$ satisfying  $a\neq -a$. This allows us to define a function on $G$ as
$$\phi(x):=\left\{\begin{array}{ll}
                   1,& \quad x=0, \\
                   \pm \frac{J}{2},&\quad x=\pm a,  \\
                   0,& \quad otherwise;
                 \end{array}\right. $$
where $J$ is a unit quaternion $\neq \pm\widehat I$. A simple calculation tells us that $\phi$ is positive definite. It is apparent that the range of $\phi$ is not included by $C_I$, which indicates $\widehat{G}\neq G^\delta$ in view of Theorem \ref{Thm-characterization-G}.

(iii $\Rightarrow$ ii) Let $G$ be an abelian group of exponent $\leq 2$. Then $x=-x$ holds for all $x\in G$. We thus have $\phi(x)=\overline{\phi(-x)}=\overline{\phi(x)}$ for $\phi\in\mathcal{P}_*^\mathbb{H}(G)$, since $\phi$ is hermitian according to Theorem \ref{thm-basic-properties}.

(ii $\Rightarrow$ i) Clearly, this part is trivial.
\end{proof}

Although the quaternionic dual $G^\delta$ has no natural group structure, we still can find many operations on $G^\delta$ with algebraic meaning, for example, the involution $\phi^*:=\overline{\phi}$
and the rotation $R_q\phi:= q\phi q^{-1}$ with $q$ being a unit quaternion. It can be easily seen that both the operations preserve the pointwise convergence topology. They thus induce two push-forward mappings $\mu\mapsto \mu^{\hat *}$ and $\mu\mapsto \hat R_q\mu$ on $\mathcal{M}(G^\delta)$ as follows.
 $$\mu^{\hat *}(\Omega):=\mu(\Omega^*)\quad\text{and}\quad \hat R_q\mu(\Omega):=\mu( R_q^{-1}\Omega),\quad\forall\text{ Borel set } \Omega\subset G^\delta. $$
Note that $\Omega^*$ stands for the image and also the inverse image of $\Omega$ through the involution, and $R_q^{-1}\Omega$ does for the inverse image through the rotation.

Now we present an interesting property of the unique measure representations.
\begin{theorem}\label{thm-property-p-d-f-G}
If $\mu\in\mathcal{M}(G^\delta)$ is the unique nonnegative solution of \eqref{Eq-thm-characterization-on-G-integral-expression} for some given $\phi\in\mathcal{P}_*^\mathbb{H}(G)$, then the restrictions of $\mu^{\hat *}$ and $\mu$ on $G^\delta\setminus G^\delta_\mathbb{R}$ are mutually singular, or in short $\mu^{\hat *}\perp\mu$ on $G^\delta\setminus G^\delta_\mathbb{R}$.
\end{theorem}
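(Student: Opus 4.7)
The plan is to prove the contrapositive via Lemma \ref{lemma-criteria-G}: assuming that the restrictions of $\mu$ and $\mu^{\hat *}$ to $G^\delta\setminus G^\delta_\mathbb{R}$ fail to be mutually singular, I will construct a nonzero $\nu\in\mathcal{K}(G^\delta)$ whose negative part is absolutely continuous with respect to $\mu$ with essentially bounded Radon-Nikodym derivative. The engine of the argument is that although a single rotation $\hat R_q$ does not preserve $\phi$, it does preserve the integral of any symmetrised piece $\sigma+\sigma^{\hat *}$ with $\sigma$ a nonnegative measure on $G^\delta\setminus G^\delta_\mathbb{R}$: a direct computation yields
\[
\int\gamma(g)\,d\,\hat R_q(\sigma+\sigma^{\hat *})(\gamma)=q\Bigl(\int\gamma(g)\,d(\sigma+\sigma^{\hat *})(\gamma)\Bigr)q^{-1}=\int\gamma(g)\,d(\sigma+\sigma^{\hat *})(\gamma),
\]
the last equality because $\int\gamma\,d(\sigma+\sigma^{\hat *})=2\Re\!\int\gamma\,d\sigma$ is real-valued and conjugation by a unit quaternion fixes real numbers. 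Consequently, moving any symmetrised sub-piece of $\mu$ by $\hat R_q$ produces a competing nonnegative representation of $\phi$.

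First I would extract such a sub-piece from $\mu$. The hypothesised non-singularity makes the absolutely continuous part of $\mu^{\hat *}|_{G^\delta\setminus G^\delta_\mathbb{R}}$ with respect to $\mu|_{G^\delta\setminus G^\delta_\mathbb{R}}$ nonzero; truncating its Radon-Nikodym derivative at a sufficiently large level $n$ and rescaling by $1/n$ produces a nonzero Radon measure $\sigma$ supported in $G^\delta\setminus G^\delta_\mathbb{R}$ with both $\sigma\leq\mu$ and $\sigma\leq \mu^{\hat *}$. Applying the involution to the second inequality yields $\sigma^{\hat *}\leq\mu$ as well, so after a further rescaling by $1/2$ I may assume $\sigma+\sigma^{\hat *}\leq\mu$. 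Second, I would arrange that a well-chosen rotation displaces this piece off itself. A point $\gamma_0$ in the topological support of $\sigma$ takes some non-real value $\gamma_0(g_0)\in\mathbb{H}\setminus\mathbb{R}$, so its stabiliser under the rotation action of $\mathbb{S}$ is confined to a single complex slice of $\mathbb{H}$. Choosing a unit quaternion $q$ outside that slice and then a basic pointwise-convergence neighbourhood $K$ of $\gamma_0$ small enough that $R_q(K\cup K^*)\cap(K\cup K^*)=\emptyset$, and finally replacing $\sigma$ by its restriction to $K$, I ensure that $\sigma$ remains nonzero and that $\sigma+\sigma^{\hat *}$ and $\hat R_q(\sigma+\sigma^{\hat *})$ have disjoint supports.

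Finally set $\nu:=\hat R_q(\sigma+\sigma^{\hat *})-(\sigma+\sigma^{\hat *})$. The displayed identity gives $\nu\in\mathcal{K}(G^\delta)$; the support disjointness arranged in the previous step identifies $\nu^-=\sigma+\sigma^{\hat *}$; and the domination $\sigma+\sigma^{\hat *}\leq\mu$ shows $\nu^-\ll\mu$ with density bounded by $1$. This is exactly the configuration forbidden by Lemma \ref{lemma-criteria-G}, so $\mu$ cannot be the unique nonnegative representation of $\phi$, contradicting the hypothesis. The main obstacle I anticipate is the geometric step: verifying that the stabiliser of a non-real quaternionic character lies in a single complex slice, and that the pointwise-convergence topology on $G^\delta$ provides basic open neighbourhoods fine enough to realise $R_q(K\cup K^*)\cap(K\cup K^*)=\emptyset$. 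Once that geometric arrangement is in place, the remaining measure-theoretic bookkeeping is formal.
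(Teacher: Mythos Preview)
Your proposal is correct and follows essentially the same route as the paper's proof: extract from $\mu$ a piece dominated by both $\mu$ and $\mu^{\hat *}$, localise it near a non-real character, rotate it away, and use that $\gamma+\gamma^*$ is real-valued (hence fixed by every $R_q$) to land in $\mathcal{K}(G^\delta)$. One small caveat at precisely the geometric step you flag: choosing $q$ merely outside the stabiliser slice $\mathbb{C}_I$ of $\gamma_0$ does not yet rule out $R_q\gamma_0=\gamma_0^*$ (any purely imaginary unit $q\perp I$ conjugates $\mathbb{C}_I$ pointwise and so sends $\gamma_0$ to $\gamma_0^*$), and without that the separation $R_q(K\cup K^*)\cap(K\cup K^*)=\emptyset$ cannot be arranged; the paper handles this by taking the specific $q=\tfrac{1}{\sqrt2}(1+J)$ with $J$ anti-commuting with $I$, which makes $\gamma_0,\gamma_0^*,R_q\gamma_0,R_q\gamma_0^*$ four distinct points at once.
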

\begin{proof}
Assume that the restrictions of $\mu^*$ and $\mu$ on $G^\delta\setminus G^\delta_\mathbb{R}$ are not mutually singular.
Due to Lemma \ref{lemma-criteria-G}, it suffices to show that there exists a (signed) measure $\nu\in \mathcal{K}(G^\delta)$ such that its lower variation $\nu^-$ is absolutely continuous with respect to $\mu$ and the Radon-Nikodym derivative $d \nu^- /d \mu$ is essentially bounded.

In order to find such measure $\nu$, we need to construct a neighbour with a well-chosen separation property for every element $\phi\in G^\delta\setminus G^\delta_\mathbb{R}$. The details is as follows.
Since $\phi$ is not real-valued, we have $\phi^*\neq \phi$. Moreover, as demonstrated in the proof of Theorem \ref{Thm-ext-boundary-on-G}, there exists an imaginary unit $I$ such that the complex slice $C_{I}$ includes the range of $\phi$, which is identical with that of $\phi^*$. Take $J$ as another  imaginary unit that anti-commutes with  $I$ and define a quaternion as $q:=\frac{1}{\sqrt 2}(1+J)$. Then we get four distinct elements in $G^\delta\setminus G^\delta_\mathbb{R}$, namely $\phi$, $\phi^*$, $R_q\phi$ and $R_q\phi^*$. And there are neighbours of each which are disjoint from each other, since  $G^\delta\setminus G^\delta_\mathbb{R}$ is Hausdorff. We denote these neighbours by $U_i$ ($i=1,2,3,4$) in sequence. Finally, a neighbour of $\phi$ in the subspace $G^\delta\setminus G^\delta_\mathbb{R}$ is given as
$$U_\phi:=U_1\cap (U_2)^* \cap R_{q^{-1}}(U_3) \cap R_{q^{-1}}((U_4)^*),$$
and by its construction there is a quaternion $q_\phi$ such that $U_\phi$, $(U_\phi)^*$, $R_{q_\phi}(U_\phi)$ and $R_{q_\phi}((U_\phi)^*)$ are disjoint from each other.

Now we proceed to construct the desired measure $\nu$. Let $\mu^{\hat *}=\mu_1+\mu_2$ be the Lebesgue decomposition of $\mu^{\hat *}$ into the absolutely continuous part $\mu_1$ and the singular part $\mu_2$ with respect to $\mu$. According to the premise, the absolutely continuous part does not vanish on $G^\delta\setminus G^\delta_\mathbb{R}$, namely $\mu_1(G^\delta\setminus G^\delta_\mathbb{R})$ is strictly positive. Thus the Radon-Nikodym derivative $d \mu_1/ d \mu$ is not essentially zero on $G^\delta\setminus G^\delta_\mathbb{R}$. Neither is the nonnegative measurable function $f:=\min\left\{1,\frac{d \mu_1}{d \mu}\right\}$, which  implies that $f$ must be not essentially zero on some neighbour $U_\phi$ (w.r.t. $\mu$), since all the neighbours cover the whole subspace $G^\delta\setminus G^\delta_\mathbb{R}$. This observation allows we define a nonzero measure $\nu$ as
$$\nu:=-\lambda-\lambda^{\hat *}+\hat R_{q_\phi}\lambda+ \hat R_{q_\phi}\lambda^{\hat *}. $$
Here $\lambda$ is given by $d\lambda=\left(\chi_{U_\phi}f\right)d\mu$, and $\chi_{U_\phi}$ stands for the characteristic function of the neighbour $U_\phi$. Indeed, $\lambda$ and the three push-forward measures $\neq 0$, because $f$ is not essentially zero on $U_\phi$ with respect to $\mu$. Furthermore, the masses of them lie in four disjoint neighbours $U_\phi$, $(U_\phi)^*$,  $R_{q_\phi}(U_\phi)$ and $R_{q_\phi}((U_\phi)^*)$ respectively, meaning these four nonzero measures are singular. It confirms that $\nu$ is nonzero. In particular, the lower variation of $\nu$ is exactly the sum of $\lambda$ and $\lambda^{\hat *}$.

It remains to show that $\nu$ satisfies the following two conditions.
\begin{itemize}
  \item[i)]$\nu$ belongs to $\mathcal{K}(G^\delta)$, namely $\displaystyle{\int_{G^\delta}\gamma d\nu(\gamma)=0}$.
  \item[ii)]The lower variation of $\nu$, i.e., $\lambda+\lambda^{\hat *}$, is absolutely continuous with respect to $\mu$, and the corresponding Radon-Nikodym derivative is essentially bounded.
\end{itemize}

We start with the first condition. By definition,
\begin{equation}\label{eq-them-p-d-f-G-1}
  \begin{array}{rll}
    \displaystyle{\int_{G^\delta}\gamma d\nu(\gamma)}= &\displaystyle{-\int_{G^\delta}\gamma d\lambda (\gamma)} & \displaystyle{-\int_{G^\delta}\gamma d\lambda^{\hat *}(\gamma)} \\
    & \\
      & \displaystyle{+\int_{G^\delta}\gamma d\hat R_{q_\phi}\lambda (\gamma)} & \displaystyle{+\int_{G^\delta}\gamma d\hat R_{q_\phi}\lambda^{\hat *}(\gamma)}.
  \end{array}
\end{equation}
Here $\gamma$ is considered as an identity function or a vector variable on $G^\delta$.
For simplicity, we write the four terms on the right as $T_i$ ($i=1,\cdots,4$).
Recall that $\hat{*}$ and $\hat R_{q_\phi}$ are the respective push-forward mappings induced by the involution $*$ and the rotation $R_{q_\phi}$ on $G_\delta$. Thus for the last three terms on the right, we have
\begin{align*}
  T_2=&\int_{G^\delta}\gamma^* d\lambda (\gamma), \\
  T_3=&\int_{G^\delta}R_{q_\phi}\gamma d\lambda (\gamma), \\
  T_4=&\int_{G^\delta}R_{q_\phi}\gamma^* d\lambda (\gamma).  \\
\end{align*}
Substituting the three expressions back into \eqref{eq-them-p-d-f-G-1} gives
\begin{equation*}
\int_{G^\delta}\gamma d\nu(\gamma)=\int_{G^\delta}-(\gamma+\gamma^*)+R_{q_\phi}(\gamma+\gamma^*)d\lambda(\gamma).
\end{equation*}
It follows that $\nu$ belongs to $\mathcal{K}(G^\delta)$, since $\gamma+\gamma^*$ is real-valued as a function on $G$ and every real-valued function is invariant under the rotation $R_{q_\phi}$.

Now we turn to the second condition. Recall that $\mu^{\hat *}=\mu_1+\mu_2$ is the Lebesgue decomposition of $\mu^{\hat *}$ into the absolutely continuous part $\mu_1$ and the singular part $\mu_2$ with respect to $\mu$, and $\lambda$ is defined by $d\lambda=\left(\chi_{U_\phi}\min\left\{1,\frac{d \mu_1}{d \mu}\right\}\right)d\mu$, where the neighbour $U_\phi$ is disjoint from its image through the involution, i.e., $(U_\phi)^*$. According to the definition of $\lambda$, we see:
\begin{itemize}
  \item[a.]The mass of $\lambda$ is restricted in $U_\phi$,
  \item[b.]$\lambda\leq\mu_1$ and $\mu$.
\end{itemize}
In addition, applying the involution to the Lebesgue decomposition of $\mu^{\hat *}$ leads to $\mu=\mu_1^{\hat *}+\mu_2^{\hat *}\geq \mu_1^{\hat *}$, which together with Condition b indicates $\mu\geq \lambda^{\hat *}$. Moreover, by Condition b, we see that the mass of $\lambda^{\hat *}$ is restricted in $(U_\phi)^*$. To summarize what we have obtained, firstly, $\lambda$ and $\lambda^{\hat *}$ both $\leq \mu$; secondly, $\lambda$ and $\lambda^{\hat *}$ are mutually singular, since their masses are separately restricted in two disjoint sets. In conclusion, $\lambda+\lambda^{\hat *}\leq\mu$, so that $\lambda+\lambda^{\hat *}$ is absolutely continuous with respect to $\mu$ and the corresponding  Radon-Nikodym derivative is essentially bounded. This completes the proof.
\end{proof}

The answer to the second main question is given in the theorem below.
\begin{theorem}\label{thm-not-Bauer-G}
$\mathcal{P}_*^\mathbb{H}(G)$ is a Bauer simplex
if and only if $G$ is an abelian group of exponent $\leq 2$, or equivalently a $\mathbb{Z}_2$-vector space.
\end{theorem}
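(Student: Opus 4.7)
The proof splits along the two directions of the biconditional.

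For sufficiency, suppose $G$ has exponent $\leq 2$. By Lemma \ref{lemma-equiv-algebra-feature} every $\phi \in \mathcal{P}_*^\mathbb{H}(G)$ is real-valued and $G^\delta = \widehat G$, so the integral representation of Theorem \ref{Thm-characterization-G} collapses to the classical Bochner representation on the abelian group $G$. The uniqueness of the Bochner measure (equivalently, the fact recalled in the introduction that the classical set of normalized positive definite functions is a Bauer simplex) transfers directly to $\mathcal{P}_*^\mathbb{H}(G)$.

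For necessity I argue by contrapositive. Assuming $G$ has exponent greater than $2$, I exhibit one $\phi \in \mathcal{P}_*^\mathbb{H}(G)$ whose integral representation \eqref{Eq-thm-characterization-on-G-integral-expression} is non-unique. Pick $a \in G$ with $2a \neq 0$; since $\widehat G$ separates points, there exists $\chi \in \widehat G$ with $\chi(a) \neq \pm 1$, so $\chi$ is non-real and $\chi \neq \chi^*$, with both $\chi, \chi^* \in G^\delta \setminus G^\delta_\mathbb{R}$. Set $\phi := \tfrac12(\chi + \chi^*) = \Re \chi$ and $\mu := \tfrac12(\delta_\chi + \delta_{\chi^*})$. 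Then $\mu$ is a nonnegative solution of \eqref{Eq-thm-characterization-on-G-integral-expression} for this $\phi$, and by construction $\mu^{\hat *} = \mu$, so on $G^\delta \setminus G^\delta_\mathbb{R}$ (which carries the entire mass of $\mu$) the two measures coincide and hence are not mutually singular. The contrapositive of Theorem \ref{thm-property-p-d-f-G} forces $\mu$ not to be the unique nonnegative representation of $\phi$, and therefore $\mathcal{P}_*^\mathbb{H}(G)$ fails to be a Bauer simplex.

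A more concrete route—approach~2 previewed in the text—produces the alternative representations by hand: writing $\chi(g) = \cos\theta_g + \sin\theta_g\, \widehat I$, for each imaginary unit $J \in \mathbb{H}$ the formula $\chi_J(g) := \cos\theta_g + \sin\theta_g\, J$ defines a distinct quaternionic character with $\Re \chi_J = \Re \chi$, and the measures $\tfrac12(\delta_{\chi_J} + \delta_{\chi_J^*})$ form a continuum of pairwise distinct representatives of the same $\phi$. The principal difficulty in either route lies not in this final deduction but in Theorem \ref{thm-property-p-d-f-G}, already established; once the singularity criterion is in hand, the necessity collapses to the elementary fact that a $\hat *$-symmetric measure with support in $G^\delta \setminus G^\delta_\mathbb{R}$ cannot satisfy that criterion, and the exponent-greater-than-$2$ hypothesis is precisely what provides, via Lemma \ref{lemma-equiv-algebra-feature}, a $\chi$ yielding such a measure.
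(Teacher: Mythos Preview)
Your proof is correct and follows essentially the same two-pronged argument as the paper: sufficiency via Lemma~\ref{lemma-equiv-algebra-feature} and the classical Bauer-simplex property of $\widehat G$, and necessity via (1) a $\hat *$-invariant measure violating the singularity conclusion of Theorem~\ref{thm-property-p-d-f-G}, and (2) an explicit family of rotated characters sharing the same real part. The only cosmetic differences are that you locate the non-real character by invoking that $\widehat G$ separates points (applied to $2a\neq 0$) rather than citing Lemma~\ref{lemma-equiv-algebra-feature} directly, and that you describe the rotated characters as $\chi_J(g)=\cos\theta_g+\sin\theta_g\,J$ rather than as conjugates $q\chi q^{-1}$---these are the same construction.
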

Roughly speaking, $\mathcal{P}_*^\mathbb{H}(G)$ is not a Bauer simplex in most cases.
\begin{proof}
The sufficiency is an immediate result of Lemma \ref{lemma-equiv-algebra-feature}. Indeed, when $G$ is of exponent $\leq 2$, the quaternionic dual $G^\delta$, i.e., the extreme boundary of $\mathcal{P}_*^\mathbb{H}(G)$, coincides with the classical dual group $\widehat{G}:=Hom(G,\mathbb{T})$, which is the extreme boundary of a Bauer simplex as is well known.

The necessity can be proved in two ways as follows.

1. If $G$ is of exponent $> 2$, then by Lemma \ref{lemma-equiv-algebra-feature} we see $G^\delta_\mathbb{R}$ is a proper subset of $G^\delta$. Thus there exists a nonnegative Radon measure $\nu\in\mathcal{M}(G^\delta)$ satisfying $\nu(G^\delta\setminus G^\delta_\mathbb{R})>0$.
It is obvious that the nonnegative Radon measure $\mu$ given as $\mu:=\nu+\nu^{\hat *}$ is identical with its involution $\mu^{\hat *}$, so that the restrictions of $\mu$ and $\mu^{\hat *}$ on $G^\delta\setminus G^\delta_\mathbb{R}$ are not mutually singular. Hence, in view of Theorem \ref{thm-property-p-d-f-G}, the uniqueness of the solution to \eqref{Eq-thm-characterization-on-G-integral-expression} breaks down for a certain positive definite function. Therefore, we are led to the conclusion that $\mathcal{P}_*^\mathbb{H}(G)$ is not a Bauer simplex, when $G$ is of exponent $> 2$.

2. Under the same assumption on $G$ as above, we have $G^\delta\setminus G^\delta_\mathbb{R}\neq\emptyset$. Take $\phi$ as an element in $G^\delta\setminus G^\delta_\mathbb{R}$. As we demonstrated in the preceding subsection, the range of $\phi$ is included by some complex slice $C_I$. Then we define $\varphi:=(1+J)\phi(1+J)^{-1}$ where $J$ is an imaginary unit that anti-commutes with $I$. Notice that $\phi$, $\phi^*$, $\varphi$ and $\varphi^*$ are four distinct elements in $G^\delta\setminus G^\delta_\mathbb{R}$. Hence, the sum of the dirac measures centred on the elements $\phi$ and $\phi^*$ does not equals that of the dirac measures centred on the elements $\varphi$ and $\varphi^*$. A direct calculation yields that both the sums correspond to the same positive definite function $2\Re \phi$ via \eqref{Eq-thm-characterization-on-G-integral-expression}. As a consequence, $\mathcal{P}_*^\mathbb{H}(G)$ is not a Bauer simplex.
The proof is now completed.
\end{proof}

\end{document}